
\documentclass{article}

\RequirePackage[OT1]{fontenc}
\RequirePackage[
amsthm,amsmath,natbib]{imsart}
\usepackage{xcolor}
\usepackage[normalem]{ulem}

\usepackage{graphicx}

\usepackage{latexsym,amsmath}
\usepackage{amsmath,amsthm,amscd}
\usepackage{amsfonts}
\usepackage{calrsfs}
\usepackage[psamsfonts]{amssymb}
\usepackage{pstricks,pst-plot}
\usepackage{enumerate}

\usepackage{url}

\usepackage{tocvsec2}

\usepackage{epstopdf}

\usepackage{wrapfig}
\usepackage{float}

\usepackage[active]{srcltx}



\theoremstyle{plain} 
\newtheorem{theorem}{Theorem}[section]
\newtheorem{corollary}[theorem]{Corollary}

\newtheorem{lemma}[theorem]{Lemma}
\newtheorem{proposition}[theorem]{Proposition}

\theoremstyle{definition} 

\theoremstyle{definition} 

\newtheorem*{ex*}{Example}
\theoremstyle{remark} 

\theoremstyle{remark} 
\newtheorem{remark}[theorem]{Remark}
\newtheorem*{remark*}{Remark}
\numberwithin{equation}{section}

\newcommand{\mathsym}[1]{{}}
\newcommand{\unicode}[1]{{}}

\newcommand{\iincludegraphics}[1]{{}}

\renewcommand{\le}{\leqslant}
\renewcommand{\ge}{\geqslant}

\newcommand{\ii}[1]{\operatorname{I}\left\{#1\right\}}

\newcommand{\eqs}{\overset{\operatorname{sign}}=}

\newcommand{\dd}{\,{\mathrm d}}

\newcommand{\supp}{\operatorname{supp}}
\newcommand{\card}{\operatorname{card}}

\DeclareMathOperator*{\argmax}{argmax}

\newcommand{\num}{\operatorname{num}}
\newcommand{\den}{\operatorname{den}}

\newcommand{\R}{\mathbb{R}}

\newcommand{\E}{\operatorname{\mathsf{E}}}
\renewcommand{\P}{\operatorname{\mathsf{P}}}
\newcommand{\MM}{\operatorname{\mathsf{M}}}

\newcommand{\de}{\delta}
\newcommand{\vp}{\varepsilon}

\newcommand{\la}{\lambda}
\newcommand{\si}{\sigma}
\newcommand{\Si}{\Sigma}
\renewcommand{\th}{\theta}
\newcommand{\Th}{\Theta}

\newcommand{\td}{\tilde d}

\newcommand{\tX}{\tilde X}
\newcommand{\tY}{\tilde Y}

\newcommand{\tvp}{\tilde\vp}

\newcommand{\A}{\mathcal{A}}
\newcommand{\M}{\mathcal{M}}
\renewcommand{\SS}{\mathcal{S}}
\newcommand{\X}{\mathcal{X}}
\newcommand{\PP}{\mathcal{P}}

\newcommand{\XX}{\mathbf{X}}
\newcommand{\xx}{\mathbf{x}}

\newcommand{\pd}[2]{\frac{\partial#1}{\partial#2}}

\newcommand{\sint}{\textstyle{\int}} 

\begin{document}

\begin{frontmatter}

\title{Exact bounds on the truncated-tilted mean, with applications
}
\runtitle{Bounds on the truncated-tilted mean
}
\date{\today}

\begin{aug}
\author{\fnms{Iosif} \snm{Pinelis}\ead[label=e1]{ipinelis@mtu.edu}}
\runauthor{Iosif Pinelis}

\affiliation{Michigan Technological University}

\address{Department of Mathematical Sciences\\
Michigan Technological University\\
Houghton, Michigan 49931, USA\\
E-mail: \printead[ipinelis@mtu.edu]{e1}
}
\end{aug}

\begin{abstract}
Exact upper bounds on the Winsorised-tilted mean, $
\frac{\E Xe^{h(X\wedge w)}}{\E e^{h(X\wedge w)}}$, of a random variable $X$ in terms of its first two moments are given. 
Such results are needed in work on nonuniform Berry--Esseen-type bounds for general nonlinear statistics. 
As another application,  
optimal upper bounds on the Bayes posterior mean are provided. 
Certain monotonicity properties of the tilted mean are also presented. 
\end{abstract}

 
\begin{keyword}[class=AMS]
\kwd[Primary ]{60E15}
\kwd[; secondary ]{60E10}
\kwd{60F10}
\kwd{60F05}
\end{keyword}
\begin{keyword}
\kwd{exact upper bounds}
\kwd{Winsorization}
\kwd{truncation}
\kwd{large deviations}
\kwd{nonuniform Berry-Esseen bounds}
\kwd{Cram\'er tilt transform}
\kwd{monotonicity}
\kwd{Bayes posterior mean}
\end{keyword}

\end{frontmatter}

\settocdepth{chapter}

\tableofcontents 

\settocdepth{subsubsection}

\section{Introduction}\label{intro}

Cram\'er's tilt transform of a random variable (r.v.) $X$ is a r.v.\ $X_h$ such that 
\begin{equation*}
	\E f(X_h)=\frac{\E f(X)e^{h\,X}}{\E e^{h\,X}}
\end{equation*}
for all nonnegative Borel functions $f$, where $h$ is a real parameter. This transform is an important tool in the theory of large deviation probabilities $\P(X>x)$, where $x>0$ is a large number; then the appropriate value of the parameter $h$ is positive.  
Unfortunately, if the right tail of the distribution of $X$ decreases slower than exponentially, then $\E e^{h\,X}=\infty$ for all $h>0$ and thus the tilt transform is not applicable. 
The usual recourse then is to replace $X$ in the exponent by its truncated counterpart, say $X\ii{X\le w}$ or $X\wedge w$, where $w$ is a real number. 
As shown in \cite{winzor,nonlinear}, of the two mentioned kinds of truncation, it is the so-called Winsorization, $X\wedge w$, of the r.v.\ $X$ that is more useful in the applications considered there. 

In particular, in \cite{nonlinear} one needs a good upper bound on the mean 
\begin{equation}\label{eq:E_h}
	\E_{h,w}X:=\frac{\E Xe^{h(X\wedge w)}}{\E e^{h(X\wedge w)}}. 
\end{equation}
of the Winsorised-tilted distribution of $X$. 
Note that $\E_{h,w}X$ is well defined and finite for any $h\in(0,\infty)$, any $w\in\R$, and any r.v.\ $X$ such that $\E(0\vee X)<\infty$. 

In \cite{winzor}, exact upper bounds on the denominator $\E e^{h(X\wedge w)}$ of the ratio in \eqref{eq:E_h} were provided, which effected 
a significant improvement on previously obtained upper bounds on $\E_{h,w}X$ (also, \cite{winzor} contained applications to pricing of certain financial derivatives). 
However, before the present study, the numerator of the ratio in \eqref{eq:E_h} was bounded separately from the denominator, which entailed a serious loss in accuracy. 
In this paper, exact upper bounds on $\E_{h,w}X$ will be provided, in terms of the first two moments of the r.v.\ $X$. 
In fact, without loss of generality one may assume that $\E X=0$, since 
\begin{equation}\label{eq:shift}
	\E_{h,w}(X+m)=m+\E_{h,w-m}X
\end{equation}
for any $m\in\R$ \big(and, again, any $h\in(0,\infty)$ and $w\in\R$). 

We shall show (Theorem~\ref{th:sup=sup}) that the maximum of $\E_{h,w}X$ is attained at a r.v.\ $X$ with  taking one just two values. 
This allows for further analysis leading to a rather easily computable expression for the maximum of $\E_{h,w}X$, as well as simple and explicit, but at the same time optimal, upper bounds on this maximum; these latter results are provided in Theorem~\ref{th:1}. 
We shall also present various monotonicity properties of the maximum \big(part (II) of Theorem~\ref{th:sup=sup}, and Proposition~\ref{prop:mono}\big), and also demonstrate uniqueness of the maximizer \big(part (I) of Theorem~\ref{th:1}\big). 
In addition, we shall apply some of these results to obtain optimal upper bounds on the Bayes posterior mean. 

\section{Summary and discussion}\label{results}

Take any $h$ and $\si$ 
in $(0,\infty)$ and any $w\in\R$. 
It will sometimes be more convenient to state the results, not in terms of r.v.'s $X$, but in terms of the corresponding probability distributions $P$, so that we shall use the notation  
\begin{equation*}
	\MM_{h,w}P:=\frac{\int_\R xe^{h(x\wedge w)}P(\dd x)}{\int_\R e^{h(x\wedge w)}P(\dd x)}
\end{equation*}
instead of $\E_{h,w}X$ --- with $\MM$ standing for ``the (Winsorised-tilted) mean''. 

Let $\PP$ denote the set of all probability distributions (that is, Borel probability measures) on $\R$. 
Let then 
\begin{equation}\label{eq:PP}
\begin{aligned}
	\PP_\si&:=\{P\in\PP\colon\sint_\R\, x\,P(\dd x)=0,\;\sint_\R\, x^2\,P(\dd x)=\si^2\}, \\ 
	\PP_{\le\si}&:=\{P\in\PP\colon\sint_\R\, x\,P(\dd x)=0,\;\sint_\R\, x^2\,P(\dd x)\in(0,\si^2]\}, \\ 
	\PP^{(2)}_{w,\si}&:=\{P_{u,v}\colon -\infty<-u<w\le v<\infty,\;uv=\si^2\}, \\ 
	\PP^{(2)}_{w,\le\si}&:=\{P_{u,v}\colon -\infty<-u<w\le v<\infty,\;uv\in(0,\si^2]\},  
\end{aligned}	
\end{equation}
where, for any positive real numbers $u$ and $v$, the symbol $P_{u,v}$ stands for the unique zero-mean probability distribution on the two-point set $\{-u,v\}$.  
Note here that (i) the conditions $-\infty<-u<w\le v<\infty$ and $uv>0$ 
imply that $u>0$ and $v>0$; and (ii) $\sint_\R\, x^2\,P_{u,v}(\dd x)=uv$; so, 
$\PP^{(2)}_{w,\si}\subset\PP_\si$ and $\PP^{(2)}_{w,\le\si}\subset\PP_{\le\si}$.   

Let $\X_\si$ denote the class of all r.v.'s whose probability distributions belong to the set $\PP_\si$; similarly define the classes $\X_{\le\si}$, $\X^{(2)}_{w,\si}$, and $\X^{(2)}_{w,\le\si}$. 

Define now the corresponding suprema: 
\begin{equation}\label{eq:SS}
\begin{alignedat}{2}
	\SS_{h,w,\si}:=&\sup\{\MM_{h,w}P\colon P\in\PP_\si\}&&=\sup\{\E_{h,w}X\colon X\in\X_\si\}, \\
	\SS_{h,w,\le\si}:=&\sup\{\MM_{h,w}P\colon P\in\PP_{\le\si}\}&&=
					\sup\{\E_{h,w}X\colon X\in\X_{\le\si}\}, \\
	\SS^{(2)}_{h,w,\si}:=&\sup\{\MM_{h,w}P\colon P\in\PP^{(2)}_{w,\si}\}&&=
					\sup\{\E_{h,w}X\colon X\in\X^{(2)}_{w,\si}\}, \\
	\SS^{(2)}_{h,w,\le\si}:=&\sup\{\MM_{h,w}P\colon P\in\PP^{(2)}_{w,\le\si}\}&&=
	\sup\{\E_{h,w}X\colon X\in\X^{(2)}_{w,\le\si}\}.   
\end{alignedat}	
\end{equation}
Consider also the attainment sets for these suprema: 
\begin{equation}\label{eq:A}
\begin{aligned}
	\A_{h,w,\si}:=&\{P\in\PP_\si\colon\MM_{h,w}P=\SS_{h,w,\si}\}, \\
	\A_{h,w,\le\si}:=&\{P\in\PP_{\le\si}\colon\MM_{h,w}P=\SS_{h,w,\le\si}\}, \\
	\A^{(2)}_{h,w,\si}:=&\{P\in\PP^{(2)}_{w,\si}\colon\MM_{h,w}P=\SS^{(2)}_{h,w,\si}\}, \\
	\A^{(2)}_{h,w,\le\si}:=&\{P\in\PP^{(2)}_{w,\le\si}\colon\MM_{h,w}P=\SS^{(2)}_{h,w,\le\si}\}.   
\end{aligned}	
\end{equation}

We shall say, interchangeably, that some or all of the four suprema in \eqref{eq:SS} (and the related suprema in \eqref{eq:SSk} below) are attained at a r.v.\ $X$ or at a probability distribution $P$, assuming that $P$ is the distribution of $X$.

\begin{theorem}\label{th:sup=sup}
The following statements hold:  
\begin{enumerate}[(I)]
	\item the four suprema in \eqref{eq:SS} are all the same: 
\begin{equation}\label{eq:sup=sup}
\SS_{h,w,\le\si}=\SS_{h,w,\si}=\SS^{(2)}_{h,w,\si}=\SS^{(2)}_{h,w,\le\si};   
\end{equation}	
	\item 
each of the four suprema in \eqref{eq:sup=sup} is (strictly) increasing in $\si\in(0,\infty)$; 	
	\item 
each of these suprema is attained, and 
\begin{equation}\label{eq:A=A}
\A_{h,w,\le\si}=\A_{h,w,\si}=\A^{(2)}_{h,w,\si}=\A^{(2)}_{h,w,\le\si}.    
\end{equation}
\end{enumerate}
\end{theorem}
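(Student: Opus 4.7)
The plan is to first dispose of the easy inclusions, then attain $\SS_{h,w,\le\si}$ and reduce the maximizer to a two-point law using the piecewise structure of $g(x):=e^{h(x\wedge w)}$, and finally read off the monotonicity and the attainment sets from an explicit two-point formula. The inclusions $\PP^{(2)}_{w,\si}\subset\PP^{(2)}_{w,\le\si}\subset\PP_{\le\si}$ and $\PP^{(2)}_{w,\si}\subset\PP_\si\subset\PP_{\le\si}$ give
\[
\SS^{(2)}_{h,w,\si}\le\SS^{(2)}_{h,w,\le\si}\le\SS_{h,w,\le\si},\qquad \SS^{(2)}_{h,w,\si}\le\SS_{h,w,\si}\le\SS_{h,w,\le\si},
\]
so for Part~(I) only the non-trivial bound $\SS_{h,w,\le\si}\le\SS^{(2)}_{h,w,\si}$ remains.

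For attainment, $\PP_{\le\si}$ is tight by Chebyshev and weakly closed within the zero-mean probabilities (uniform integrability of $x$ from the second-moment bound, and Fatou on $x^2$); and $\MM_{h,w}$ is weakly continuous on $\PP_{\le\si}$ because $g$ is bounded by $e^{hw}$ and $|xg(x)|\le e^{hw}|x|$ is uniformly integrable under the $\si^2$ bound. So a maximizer $P^*$ of $\SS_{h,w,\le\si}$ exists. I then reduce $P^*$ to a two-point law in two stages. \emph{Right-collapse:} since $g\equiv e^{hw}$ on $[w,\infty)$, replacing $P^*|_{[w,\infty)}$ by a point mass at the conditional mean $v\ge w$ preserves both the numerator $\sint_\R xg\,\dd P^*$ and the denominator $\sint_\R g\,\dd P^*$, preserves the overall mean, and (by Jensen) does not increase the variance; so the new measure stays in $\PP_{\le\si}$ and has the same $\MM_{h,w}$. \emph{Left-collapse:} setting $\SS:=\SS_{h,w,\le\si}$, the linear functional $L(P):=\sint_\R(x-\SS)g(x)\,\dd P$ is $\le 0$ on $\PP_{\le\si}$ with equality at $P^*$, so by Lagrange duality there exist $A,B\in\R$ and $C\ge 0$ with $A+C\sint_\R x^2\,\dd P^*\le 0$ such that $Q(x):=A+Bx+Cx^2\ge(x-\SS)g(x)$ for all $x$, with equality on $\supp P^*$. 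On $[w,\infty)$ the right-hand side is affine, so $Q(x)-(x-\SS)e^{hw}$ is a nonnegative quadratic admitting at most one zero on that half-line, giving at most one atom above~$w$; on $(-\infty,w)$ the zero-count of $Q(x)-(x-\SS)e^{hx}$ via the sign pattern of its second derivative $2C-he^{hx}\bigl(2+h(x-\SS)\bigr)$, combined with the structure of $g$ at the kink $x=w$, limits the atoms below~$w$ and forces $P^*=P_{u,v}\in\PP^{(2)}_{w,\le\si}$. Hence $\SS_{h,w,\le\si}\le\SS^{(2)}_{h,w,\le\si}$.

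On $\PP^{(2)}_{w,\le\si}$ one has the explicit formula
\[
\MM_{h,w}P_{u,v}=\frac{uv\,(e^{hw}-e^{-hu})}{v\,e^{-hu}+u\,e^{hw}},
\]
from which a direct computation gives
\[
\partial_v\MM_{h,w}P_{u,v}=\frac{u^2\,e^{hw}\,(e^{hw}-e^{-hu})}{(v\,e^{-hu}+u\,e^{hw})^2}>0,
\]
using $-u<w$ to see $e^{hw}>e^{-hu}$. This strict $v$-monotonicity immediately yields Part~(II) (a $\si$-increase gives room to enlarge $v$ and strictly raise $\MM_{h,w}$) and also forces $uv=\si^2$ at any maximizer of $\SS^{(2)}_{h,w,\le\si}$, so $\SS^{(2)}_{h,w,\le\si}=\SS^{(2)}_{h,w,\si}$ and Part~(I) is complete. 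Attainment of $\SS^{(2)}_{h,w,\si}$ is a one-variable continuous optimization with easily checked boundary behavior; combined with the reduction this gives Part~(III), and the equalities of attainment sets in \eqref{eq:A=A} follow from the observation that each step of the reduction preserves $\MM_{h,w}$ and that the strict $v$-derivative forces $uv=\si^2$ at any maximizer.

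I expect the main obstacle to be the left-collapse step: controlling the number of zeros of $Q(x)-(x-\SS)g(x)$ across the kink of $g$ at $x=w$, together with handling the degenerate cases (an atom exactly at~$w$; $C=0$; $\SS=0$), is where the real analytic work lies. Once this support-cardinality bound is pinned down, the explicit two-point formula takes over and the remaining assertions reduce to routine computation.
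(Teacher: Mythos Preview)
Your architecture is reasonable and shares the paper's core idea (majorize $(x-\SS)g(x)$ by a quadratic and bound the contact set), but two steps are not yet proofs.

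The first is the existence of the majorizing quadratic. Your phrase ``by Lagrange duality there exist $A,B\in\R$ and $C\ge0$\ldots'' hides exactly the issue the paper works hardest on: strong duality for moment problems on the noncompact line is not automatic. The paper compactifies to $S=[-\infty,\infty]$, reweights all functions by $1/(1+s^2)$ so they extend to bounded continuous functions on $S$, checks that $(1,0,\si^2)$ is interior to the resulting moment cone, and then invokes Kemperman's duality theorems to produce the dual triple $(c_1^\circ,c_2^\circ,c_3^\circ)$ and a maximizing measure simultaneously. Without some such device you are not entitled to the multipliers. (Your side condition $A+C\int x^2\,dP^*\le0$ is also off: integrating the contact identity against $P^*$ gives the \emph{equality} $A+C\int x^2\,dP^*=0$, while complementary slackness is the separate relation $C\bigl(\si^2-\int x^2\,dP^*\bigr)=0$.)

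The second is the zero-count below $w$, which you correctly flag as the obstacle but do not resolve. The paper's argument differs from your $D''$ sketch and is worth knowing: it treats the shift $k$ as a \emph{free} unknown alongside $(c_1,c_2,c_3)$, notes that two tangency points $u_1<u_2<w$ would force $\td(u_i)=\td'(u_i)=0$ for $i=1,2$, solves this $4\times4$ linear system for $(c_1,c_2,c_3,k)$, and computes that the resulting $c_3$ is negative---contradicting $c_3>0$, which is forced by examining the inequality as $s\to\pm\infty$ (this same growth argument also disposes of your degenerate case $C=0$). Organizationally the paper inverts your order: it first proves the whole theorem for the linear functional $\E(X-k)e^{h(X\wedge w)}$ for every $k$ (its Theorem~\ref{th:k}), and then reads off the ratio statement by taking $k=\SS^{(2)}_{h,w,\si}$. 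Your right-collapse and the explicit $\partial_v\MM_{h,w}P_{u,v}>0$ are genuine simplifications the paper does not exploit; it gets strict $\si$-monotonicity from the dual side instead, via $c_3^\circ>0$ and $c_1^\circ+c_3^\circ\si_1^2<c_1^\circ+c_3^\circ\si^2$. Finally, for \eqref{eq:A=A} your reduction shows only that \emph{some} maximizer is two-point; to conclude that \emph{every} maximizer is, you need the dual quadratic chosen independently of $P^*$ and then the standard argument that any maximizer is supported in its contact set.
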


The proofs will be given in Section~\ref{proofs}. 

Let us now show how to compute effectively the four equal suprema in Theorem~\ref{th:sup=sup};  
in particular, we shall see that each of the four attainment sets in \eqref{eq:A=A} is a singleton one. 
We shall also provide simple (and, in a sense, optimal) upper bounds on these 
suprema; such bounds are what was needed in \cite{nonlinear}. 

\begin{remark}\label{rem:scaling} 
The shift-transformation formula \eqref{eq:shift} allowed us to reduce the consideration to zero-mean distributions. 
One can also do rescaling, to reduce the set of all possible values of the Winsorization level $w$ from $\R$ to $\{-1,0,1\}$.  
Indeed, 
observe that 
\begin{equation*}
	\E_{h,w}X=|w|\,\E_{h|w|,\,w/|w|}\frac X{|w|}
\end{equation*}
for any real $w\ne0$ (and any $X\in\X_{\le\si}$), which  
implies  
$$\SS_{h,w,\si}=|w|\,\SS_{h|w|,\,w/|w|,\,\si/|w|}$$
and the similar formulas for the other three suprema in \eqref{eq:sup=sup}. 
So, without loss of generality let us assume that $w\in\{-1,0,1\}$, which will allow us  
to simplify the writing.  
\end{remark}  

To state Theorem~\ref{th:1} below, more notation is needed. 
For any $\vp\in(0,\infty)$, let 
\begin{equation}\label{eq:uu}
u_*(h,\vp):=\vp^2\,	\frac{e^{(1+\vp)h}-1-\vp h}{ 1+\vp h-e^{-(1+\vp)h}}. 
\end{equation}

Let $\eqs$ mean ``equals in sign to''.

The following proposition allows one to define terms used in the statement of Theorem~\ref{th:1}. 

\begin{proposition}\label{prop:def}
\ 
\begin{enumerate}[(i)]
	\item 
There is a unique root $\si_h\in(0,\infty)$ of the equation 
\begin{equation*}
u_*(h,\si^2_h)=\si^2_h.   	
\end{equation*}
\item
If $\si>\si_h$, then there is a unique root $\tvp_{h,\si}\in(0,\si^2)$ of the equation 
\begin{equation}\label{eq:vp_h,si}
u_*(h,\tvp_{h,\si})=\si^2;    	
\end{equation}
moreover, $u_*(h,\vp)-\si^2\eqs\vp-\tvp_{h,\si}$ for all $\vp\in(0,\si^2)$. 
	\item 
For each $w\in\{-1,0\}$, there is a unique root $\vp_{h,w,\si}\in(|w|,\infty)$ of the equation 
\begin{equation}\label{eq:vp_-1}
\begin{aligned}
r_{w,1}(\vp_{h,w,\si})&=  0,\ \text{where}\\  
	r_{w,1}(\vp)&:= e^{(\varepsilon+w) h} (1+\varepsilon h) \left(\varepsilon ^2+\sigma ^2\right)
	-\varepsilon ^2e^{2 (\varepsilon+w) h}-\sigma ^2. 
\end{aligned}   
\end{equation}
\end{enumerate}
\end{proposition}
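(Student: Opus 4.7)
The three claims are handled in sequence; each reduces to a monotonicity plus intermediate-value argument applied to an appropriate auxiliary function.

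For (i), I would introduce $\phi(\vp):=u_*(h,\vp)/\vp$ on $(0,\infty)$ and show it is continuous and strictly increasing, with $\phi(0^+)=0$ and $\phi(+\infty)=+\infty$. The limits follow from $u_*(h,\vp)\sim e^h\vp^2$ as $\vp\to 0^+$ (via Taylor expansion of the fraction in \eqref{eq:uu}) and from the exponential growth of its numerator as $\vp\to\infty$. Strict monotonicity of $\phi$ is the main technical step; I would attempt it by logarithmic differentiation, exploiting the identity $N(\vp)+D(\vp)=2\sinh((1+\vp)h)$ — where $N(\vp)$ and $D(\vp)$ denote the numerator and denominator of the fraction in \eqref{eq:uu} — to simplify the algebra, and finishing with a Taylor-coefficient comparison. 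Once $\phi$ is known to be a strictly increasing surjection onto $(0,\infty)$, the equation $\phi(\vp)=1$, i.e.\ $u_*(h,\vp)=\vp$, has a unique positive root, which we declare to be $\si_h^2$.

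For (ii), the identity $u_*(h,\vp)=\vp\phi(\vp)$ together with strict positivity and strict monotonicity of both factors makes $u_*(h,\cdot)$ strictly increasing on $(0,\infty)$. If $\si>\si_h$ then $\phi(\si^2)>\phi(\si_h^2)=1$, hence $u_*(h,\si^2)>\si^2$; combined with $u_*(h,0^+)=0<\si^2$, the IVT and the strict monotonicity yield a unique $\tvp_{h,\si}\in(0,\si^2)$ with $u_*(h,\tvp_{h,\si})=\si^2$, and the sign equality $u_*(h,\vp)-\si^2\eqs\vp-\tvp_{h,\si}$ follows from the same strict monotonicity.

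For (iii), it is convenient to clear the exponential: set $\tilde R_w(\vp):=(1+\vp h)(\vp^2+\si^2)-\vp^2 e^{(\vp+w)h}-\si^2 e^{-(\vp+w)h}$, so that $r_{w,1}(\vp)=e^{(\vp+w)h}\tilde R_w(\vp)$ and $r_{w,1}$ and $\tilde R_w$ share zeros. Direct computation gives $\tilde R_0(0)=0$, $\tilde R_0'(0)=2h\si^2>0$, $\tilde R_{-1}(1)=h(1+\si^2)>0$, and $\tilde R_w(\vp)\to-\infty$ as $\vp\to\infty$ in both cases, so existence of a root in the respective open half-line follows from the IVT. For uniqueness when $w=0$ I would prove $\tilde R_0''(\vp)<0$ on $(0,\infty)$; this reduces to the inequality $e^{\vp h}(2+4\vp h+(\vp h)^2)>2+6\vp h$ for $\vp>0$, which follows from term-by-term Taylor comparison using $e^{\vp h}\ge 1+\vp h+(\vp h)^2/2$. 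Strict concavity of $\tilde R_0$ on $(0,\infty)$, together with $\tilde R_0(0)=0$, $\tilde R_0'(0)>0$ and $\tilde R_0(\vp)\to-\infty$, then forces exactly one positive zero.

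The hard part will be uniqueness for $w=-1$, because $\tilde R_{-1}''$ is not uniformly negative on $(1,\infty)$ — for example $\tilde R_{-1}''(1)=h(2-h)-h^2\si^2$ is positive when $h$ and $\si^2$ are both small enough. My fallback is to parallel part (i): rewrite the equation $r_{-1,1}(\vp)=0$ as $\tilde u(h,\vp)=\si^2$ with $\tilde u(h,\vp):=\vp^2(e^{(\vp-1)h}-1-\vp h)/(1+\vp h-e^{-(\vp-1)h})$, and then show that $\tilde u(h,\cdot)$ is strictly increasing on $(1,\infty)$ from $\tilde u(h,1)=-1$ to $+\infty$, so that $\tilde u(h,\vp)=\si^2$ has at most one solution in $(1,\infty)$. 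This monotonicity would be established by essentially the same logarithmic-derivative argument used for $\phi$ in part (i).
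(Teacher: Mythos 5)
Your route through parts (i) and (ii) is sound and genuinely different from the paper's: you work with $\phi(\vp):=u_*(h,\vp)/\vp$ and claim it increases strictly from $0$ to $\infty$, whereas the paper only establishes the weaker facts that a function $\de$ with $\de(\vp)\eqs u_*(h,\vp)-\vp$ increases from $-\infty$ to $1$ (giving the single sign change of $u_*(h,\vp)-\vp$) and, separately, that $u_*(h,\cdot)$ is increasing. Your stronger claim is true but you leave the ``main technical step'' as a sketch; for the record it does close: with $t=(1+\vp)h$, $z=\vp h$, $N=e^t-1-z$, $D=1+z-e^{-t}$, the positivity of $\phi'$ reduces to $e^t(1+z+z^2)+e^{-t}(1+3z+z^2)>2+4z+z^2$, which follows from the AM--GM bound $e^t a+e^{-t}b\ge2\sqrt{ab}$ together with $4(1+z+z^2)(1+3z+z^2)-(2+4z+z^2)^2=8z^3+3z^4>0$. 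Your concavity argument for $w=0$ in part (iii) is also correct (the reduction of $\tilde R_0''<0$ to $e^{y}(2+4y+y^2)>2+6y$ checks out).

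The genuine gap is the $w=-1$ case of part (iii): your fallback claim that $\tilde u(h,\cdot)$ is strictly increasing on $(1,\infty)$ from $-1$ to $+\infty$ is false. Expanding at $\vp=1$ gives $\tilde u(h,1+\epsilon)=-1-\epsilon^2+O(\epsilon^3)$, so $\tilde u$ initially \emph{decreases} below $-1$ (numerically, $\tilde u(1,1.2)\approx-1.020$, $\tilde u(1,1.5)\approx-1.012$, before it climbs back up and off to $+\infty$). Since the dip happens where $\tilde u<0$ while the level $\si^2$ is positive, the conclusion can probably still be rescued, but that would require proving something like ``$\tilde u'>0$ wherever $\tilde u\ge0$'', which is a different and unproved statement. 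The paper instead shows that $r_{w,1}'(\vp)\eqs r_{w,2}(\vp)$ for an explicit $r_{w,2}$ whose derivative is manifestly negative, so that $r_{w,2}$ decreases from a positive value at $\vp=|w|+$ to $-\infty$; hence $r_{w,1}$ is unimodal (increasing then decreasing) on $(|w|,\infty)$, and together with $r_{w,1}(|w|+)\ge0$ and $r_{w,1}(\infty-)=-\infty$ this forces exactly one sign change and one root. I suggest you adopt that unimodality argument; it handles $w=-1$ and $w=0$ uniformly and makes your separate concavity argument for $w=0$ unnecessary.
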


Also, let us recall that $P_{u,v}$ stands for the unique zero-mean probability distribution on the two-point set $\{-u,v\}$. 
 
\begin{theorem}\label{th:1}
Take any $w\in\{-1,0,1\}$. Then the following statements hold. 
\begin{enumerate}[(I)]
	\item Each of the four attainment sets in \eqref{eq:A=A} coincides with the singleton set 
\begin{equation}\label{eq:A=}
	\A_{h,w,\si}=\{P_{\vp_{h,w,\si},\,\si^2/\vp_{h,w,\si}}\}, 
\end{equation}
where $\vp_{h,w,\si}$ is defined by \eqref{eq:vp_-1} for $w\in\{-1,0\}$, and  
\begin{align}\label{eq:vp=}
\vp_{h,1,\si}:=&
	\left\{
	\begin{alignedat}{2}
	&\si^2 &\text{ if } \si&\le\si_h, \\
	&\tvp_{h,\si} &\text{ if } \si&>\si_h, 
	\end{alignedat}
	\right.
\end{align}
with $\tvp_{h,\si}$ defined by equation \eqref{eq:vp_h,si}. 
	\item 
Moreover, 
\begin{equation}\label{eq:SS=}
	\SS_{h,w,\si}=\MM_{h,w}P_{\vp_{h,w,\si},\,\si^2/\vp_{h,w,\si}}
	<K_w(h)\si^2, 
\end{equation}
where 	
\begin{equation*}
	K_w(h):=\left\{
	\begin{alignedat}{2}
	&e^h-1 &\text{ if } w&=1, \\
		&\frac h{-L_{-1}(-e^{h w-1})} &\text{ if } w&\in\{-1,0\}, 
	\end{alignedat}
	\right.
\end{equation*} 
$L_{-1}$ is a branch of the Lambert product-log function such that for each $z\in[-e^{-1},0)$ the value $L_{-1}(z)$ is the only root $u\in(-\infty,-1]$ of the equation $u e^u=z$ (see e.g.\ \cite{knuth96} concerning properties of the Lambert function).  
One may observe that (i) $K_0(h)=h$ and (ii) $K_{-1}(h)\sim h$ as $h\downarrow0$ and $K_{-1}(h)\to1$ as $h\to\infty$.  
	\item
The constant factor $K_w(h)$ in \eqref{eq:SS=} is the best possible. 
\end{enumerate}
\end{theorem}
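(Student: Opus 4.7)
By Theorem~\ref{th:sup=sup}, the extremum of $\MM_{h,w}P$ is attained on two-point zero-mean distributions $P_{\vp,\si^2/\vp}$, reducing the problem to a one-variable optimization. Since $-\vp\wedge w=-\vp$ and $(\si^2/\vp)\wedge w=w$ throughout the admissible range of $\vp$, direct computation gives
\begin{equation*}
g_w(\vp):=\MM_{h,w}P_{\vp,\si^2/\vp}=\frac{\si^2\vp(e^{wh}-e^{-\vp h})}{\si^2 e^{-\vp h}+\vp^2 e^{wh}},
\end{equation*}
to be maximized on $(0,\si^2]$ for $w=1$, on $(0,\infty)$ for $w=0$, and on $(1,\infty)$ for $w=-1$.

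For part (I), I would compute $g'_w$ by the quotient rule and, after clearing positive denominators, verify the identity $g'_w(\vp)\eqs r_{w,1}(\vp)$ valid for every $w\in\{-1,0,1\}$, with $r_{w,1}$ as in \eqref{eq:vp_-1}. For $w\in\{-1,0\}$, Proposition~\ref{prop:def}(iii) then supplies a unique critical point $\vp_{h,w,\si}\in(|w|,\infty)$, which is the unique maximizer since $g_w$ vanishes at both endpoints of its domain. For $w=1$, straightforward algebra rewrites $r_{1,1}(\vp)=0$ as $u_*(h,\vp)=\si^2$, and the case analysis follows Proposition~\ref{prop:def}(i)--(ii): if $\si\le\si_h$, then $u_*(h,\vp)<\si^2$ throughout $(0,\si^2)$, so $g_1$ is strictly increasing on $(0,\si^2]$ and is maximized at the boundary $\vp=\si^2$; if $\si>\si_h$, the unique root $\tvp_{h,\si}\in(0,\si^2)$ of $u_*(h,\cdot)=\si^2$ is the unique interior maximizer. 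Collecting cases yields \eqref{eq:vp=} and the singleton attainment set \eqref{eq:A=}.

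For part (II), since $\si^2 e^{-\vp h}>0$, the denominator of $g_w(\vp)$ strictly exceeds $\vp^2 e^{wh}$, so $g_w(\vp)/\si^2<\phi_w(\vp):=(1-e^{-(\vp+w)h})/\vp$. For $w=0$, the elementary inequality $1-e^{-x}<x$ gives $\phi_0<h=K_0(h)$. For $w=-1$, the equation $\phi'_{-1}(\vp)=0$ reduces to $(h\vp+1)e^{-(\vp-1)h}=1$; the substitution $u=-(h\vp+1)$ turns this into $ue^u=-e^{-h-1}$, whose root with $u<-1$ is $u=L_{-1}(-e^{-h-1})$, and then the maximum value is $\phi_{-1}(\vp^*)=h/(h\vp^*+1)=K_{-1}(h)$. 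For $w=1$ this crude bound is not sharp, so I would instead use the constraint $\si^2\ge\vp$ (forced by $v\ge w=1$) to bound the denominator below by $\vp(e^{-\vp h}+\vp e^h)$, thereby reducing the desired strict inequality $g_1(\vp)<(e^h-1)\si^2$ to $1-e^{-\vp h}<\vp(e^h-1)$, which holds because $1-e^{-\vp h}<\vp h<\vp(e^h-1)$.

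For the sharpness claim (III), I would exhibit near-extremizers. For $w=-1$, fix $\vp$ at the maximizer $\vp^*$ of $\phi_{-1}$ and let $\si\downarrow0$; then $g_{-1}(\vp^*)/\si^2\to\phi_{-1}(\vp^*)=K_{-1}(h)$. For $w=0$, the function $\phi_0$ has no interior maximizer, so I would take $\vp\downarrow0$ jointly with $\si\downarrow0$ at a rate $\si\ll\vp$ (e.g.\ $\si=\vp^2$), so that $g_0/\si^2\to h$. For $w=1$, take $\vp=\si^2$ and let $\si\downarrow0$; then $g_1(\si^2)/\si^2=(e^h-e^{-\si^2 h})/(e^{-\si^2 h}+\si^2 e^h)\to e^h-1$. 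The main obstacle in executing this plan will be the sign computation behind the identity $g'_w\eqs r_{w,1}$ and the careful case treatment for $w=1$, where the boundary constraint $\vp\le\si^2$ interacts with the critical-point structure of $u_*(h,\cdot)$ in a way governed by whether $\si\lessgtr\si_h$.
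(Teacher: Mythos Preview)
Your proposal is correct and follows essentially the same approach as the paper: reduce via Theorem~\ref{th:sup=sup} to the one-variable function $g_w(\vp)=\MM_{h,w}P_{\vp,\si^2/\vp}$, locate its maximizer by the sign analysis of $g'_w$ (linked to $r_{w,1}$ and to $u_*(h,\cdot)$ through Proposition~\ref{prop:def}), and then bound $g_w(\vp)/\si^2$ by passing to the limiting ratio $\phi_w(\vp)=(1-e^{-(\vp+w)h})/\vp$ for $w\in\{-1,0\}$ and to the $\si^2=\vp$ boundary for $w=1$. The only noticeable deviations are cosmetic: for $w=0$ you invoke $1-e^{-x}<x$ directly where the paper lets the unified Lambert-function computation degenerate to $\vp_{h,0,*}=0$, and for $w=1$ you reduce the strict bound to $1-e^{-\vp h}<\vp(e^h-1)$ where the paper instead shows $\rho_{h,1}(\vp)=(e^{(1+\vp)h}-1)/(1+\vp e^{(1+\vp)h})$ is strictly decreasing; both routes rest on the same elementary inequality $1+x<e^x$.
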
 

Of course, in view of \eqref{eq:sup=sup}, the supremum $\SS_{h,w,\si}$ can be replaced in \eqref{eq:SS=} by any of the other three suprema. 

\begin{remark}\label{rem:unique}
By Remark~\ref{rem:scaling} and \eqref{eq:A=}, for any given $w\in\R$ the four attainment sets in \eqref{eq:A=A} coincide with the same singleton set; that is, each of the four suprema in \eqref{eq:sup=sup} is attained at the same unique maximizer.  
\end{remark}

Let us now propose a complement to part (II) of Theorem~\ref{th:sup=sup}; in fact, this proposition is a corollary to Theorems~\ref{th:sup=sup} and \ref{th:1}.  
To state it, let $\supp\mu$ denote, as usual, the support (set) of any Borel measure $\mu$ on $\R$; 
so, $\supp\mu$ is the complement of the union of all open sets $O\subseteq\R$ with $\mu(O)=0$; equivalently, $\supp\mu$ is the set of all points $x\in\R$ such that $\mu(O)>0$ for all open sets $O\subseteq\R$ containing the point $x$. 
For any r.v.\ $X$, let $\supp X$ denote the support of the measure that is the probability distribution of $X$; also, let 
\begin{equation*}
	i_X:=\inf\supp X\quad \text{and}\quad s_X:=\sup\supp X;  
\end{equation*}
note that one may have $i_X=-\infty$ and/or $s_X=\infty$. 

\begin{proposition}\label{prop:mono}
The following statements hold:   
\begin{enumerate}[(I)]
	\item 
for any r.v.\ $X$, $\E_{h,w}X$ is nondecreasing in $h\in(0,\infty)$ and in $w\in\R$; 
	\item 
for any r.v.\ $X$ with $i_X<s_X$ and any $w\in(i_X,\infty)$, 
$\E_{h,w}X$ is increasing in $h\in(0,\infty)$;
	\item 
for any r.v.\ $X$ and any $h\in(0,\infty)$, 
$\E_{h,w}X$ is increasing in $w\in[i_X,s_X]\cap\R$; 
	\item 
each of the four equal suprema in \eqref{eq:sup=sup} is increasing in $h\in(0,\infty)$ and in $w\in\R$. 
\end{enumerate}
\end{proposition}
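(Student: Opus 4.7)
The plan is to prove parts (I)--(III) by direct differentiation of $\E_{h,w}X=N(h,w)/D(h,w)$, where $N(h,w):=\E Xe^{h(X\wedge w)}$ and $D(h,w):=\E e^{h(X\wedge w)}$, and then to deduce (IV) from (II), (III), and the two-point structure of the maximizers supplied by Theorem~\ref{th:1}.

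For the $h$-dependence, differentiation gives
\[
\frac{\partial}{\partial h}\E_{h,w}X=\operatorname{Cov}_Q(X,X\wedge w),
\]
where $Q$ is the Winsorised-tilted law of $X$ (mutually absolutely continuous with the law of $X$). Since $X\mapsto X\wedge w$ is nondecreasing, this covariance is $\ge0$ by Chebyshev/FKG, giving the $h$-part of~(I). For strict positivity in~(II), I would use the identity $2\operatorname{Cov}_Q(X,X\wedge w)=\E_Q[(X-X')((X\wedge w)-(X'\wedge w))]$ for an i.i.d.\ copy $X'$; the integrand vanishes only when $X=X'$ or both $X,X'\ge w$, and the hypotheses $i_X<s_X$ (so $X$ is $Q$-nondegenerate) and $w>i_X$ (which, by absolute continuity, gives $Q(X<w)>0$) rule out both.

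For the $w$-dependence, $\frac{\partial}{\partial w}e^{h(X\wedge w)}=he^{hw}\ii{X>w}$ a.e.\ in $w$, so
\[
\frac{\partial}{\partial w}\E_{h,w}X=\frac{he^{hw}\P(X>w)}{D(h,w)}\bigl(\E_P[X\mid X>w]-\E_{h,w}X\bigr).
\]
The key observation is that on $\{X>w\}$ the tilt factor $e^{h(X\wedge w)}=e^{hw}$ is constant, so the conditional laws of $X$ given $\{X>w\}$ coincide under $P$ and $Q$. Writing $q:=Q(X>w)$ and decomposing $\E_{h,w}X$ into its two conditional means yields
\[
\E_P[X\mid X>w]-\E_{h,w}X=(1-q)\bigl(\E_P[X\mid X>w]-\E_Q[X\mid X\le w]\bigr)\ge0,
\]
proving (I) for $w$. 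For $w\in(i_X,s_X)$ both conditional means are strictly separated by $w$ and $q\in(0,1)$, so the derivative is strictly positive; absolute continuity of $N,D$ in $w$ (obtained by Fubini from the pointwise derivative) together with continuity of $\E_{h,w}X$ at the endpoints then upgrades this to strict monotonicity on the closed interval $[i_X,s_X]\cap\R$, giving (III).

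Finally, for (IV), fix $\sigma$ and $w$; for $h_1<h_2$, let $X^{*}\sim P_{\phi,\sigma^2/\phi}$ be the unique maximizer of $\SS_{h_1,w,\sigma}$ from Theorem~\ref{th:1}. The constraint defining $\PP^{(2)}_{w,\sigma}$ forces $i_{X^{*}}=-\phi<w\le\sigma^2/\phi=s_{X^{*}}$ with $i_{X^{*}}<s_{X^{*}}$, so part~(II) applies and yields $\SS_{h_2,w,\sigma}\ge\E_{h_2,w}X^{*}>\E_{h_1,w}X^{*}=\SS_{h_1,w,\sigma}$. Strict monotonicity in $w$ follows analogously from part~(III) whenever $w_1<s_{X^{*}}$; the residual boundary case $s_{X^{*}}=w_1$, which Theorem~\ref{th:1} shows can occur only when $w_1>0$ and $\vp_{h,w_1,\sigma}=\sigma^2/w_1$, would be handled by taking the alternative witness $Y\sim P_{\sigma^2/w_2,w_2}\in\X^{(2)}_{w_2,\sigma}$ and verifying $\MM_{h,w_2}Y>\SS_{h,w_1,\sigma}$ by a direct two-point computation. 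The main obstacle I anticipate is precisely this boundary case: the envelope argument degenerates at $v=w_1$, and a short explicit comparison (or perturbation argument) is required to conclude strict monotonicity of the supremum there.
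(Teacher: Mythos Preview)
Your proof of parts (I)--(III) is correct and essentially the same as the paper's: both compute the $h$-derivative as $\operatorname{Cov}_Q(X,X\wedge w)$ and the $w$-derivative as (a constant times) $\operatorname{Cov}_Q(X,\ii{X>w})$, then invoke the association/FKG inequality via the symmetrization identity with an independent copy. Your conditional-mean decomposition for the $w$-derivative is just the explicit form $\operatorname{Cov}_Q(X,\ii{X>w})=q(1-q)\bigl(\E_Q[X\mid X>w]-\E_Q[X\mid X\le w]\bigr)$, so the arguments coincide. The paper's treatment of strictness in (II) is slightly cleaner --- it picks a point $c\in(i_X,w\wedge s_X)$ and works on the single event $\{\tX<c<\tY\}$ --- but your version is fine.

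For part (IV), your strategy matches the paper's: take the two-point maximizer from Theorem~\ref{th:1} and feed it to parts (II)/(III). The only substantive difference is the boundary case $s_{X^*}=w_1$ (equivalently $w_1\ge\si/\si_h>0$), which you flag as an obstacle and propose to resolve by a direct comparison with $Y\sim P_{\si^2/w_2,\,w_2}$. The paper avoids this computation entirely: it observes that in this regime $v(w)=w$, so $P_{u(w_1),v(w_1)}\ne P_{u(w_2),v(w_2)}$ whenever $w_2>w_1$, and then invokes the \emph{uniqueness} of the maximizer (Remark~\ref{rem:unique}, i.e.\ $\A_{h,w_2,\si}$ is a singleton) to conclude that $\MM_{h,w_2}P_{u(w_1),v(w_1)}<\SS_{h,w_2,\si}$ strictly. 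Combined with $\MM_{h,w_2}P_{u(w_1),v(w_1)}\ge\MM_{h,w_1}P_{u(w_1),v(w_1)}=\SS_{h,w_1,\si}$ from part (I), this closes the gap with no calculation. You already cite Theorem~\ref{th:1}, so this uniqueness argument is available to you and is the cleanest way to finish.
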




The case of a positive Winsorization level $w$ is the one most important in applications. 
In accordance with Remark~\ref{rem:scaling}, this case is represented in Theorem~\ref{th:1} by $w=1$. 
Although the corresponding upper bound $(e^h-1)\si^2$ on $\SS_{h,1,\si}$ is very simple and the constant factor $K_1(h)=e^h-1$ in it is optimal, the relative error of this bound is small only if $h$ or $\si$ is small, as illustrated in (the right panel of) Figure~\ref{fig:}, 
showing the graphs of the ratios of $\SS_{h,w,\si}$ to $K_w(h)\si^2$. 
However, it is small values of $\si$ that are of particular interest in the application of Theorem~\ref{th:1} in \cite{nonlinear}. 

\begin{figure}[H]
\includegraphics[width=0.90\textwidth]{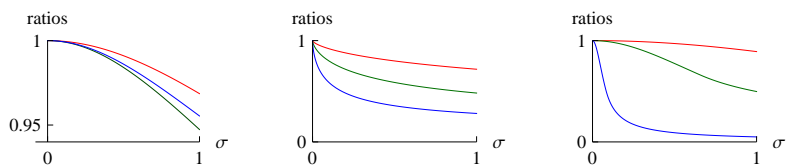}
	\caption{Graphs of the ratios of $\SS_{h,w,\si}$ to $K_w(h)\si^2$ for $w=-1$ (left panel), $w=0$ (middle panel), $w=1$ (right panel), 
	$h=\frac15,1,5$ (red, green, blue --- respectively), and $\si\in(0,1]$. }
	\label{fig:}
\end{figure}

\noindent For $w=-1$, the relative errors are seen to be rather small even for $h$ as large as $5$ and $\si$ as large as $1$. Also, the relative errors appear to be monotonic in $\si$, but not in $h$ or in $w$.


It is obvious that the upper bounds on $\E_{h,w}X$ will hold if the factor $X$ of $e^{h(X\wedge w)}$ in the numerator of the ratio in \eqref{eq:E_h} is replaced by any r.v.\ that is no greater than $X$. Thus, by Theorem~\ref{th:1} and Remark~\ref{rem:scaling}, one has 

\begin{corollary}\label{cor:Y}
Take any $h$, $w$, and $\si$ in $(0,\infty)$. 
Take then any r.v.\ $X\in\X_{\le\si}$ and let $Y$ be any r.v.\ such that $Y\le X$ with probability $1$. 
Then 
\begin{equation*}
	\frac{\E Ye^{h(X\wedge w)}}{\E e^{h(X\wedge w)}}\le\SS_{h,w,\si}<\frac{e^{hw}-1}w\,\si^2. 
\end{equation*}
In particular, one can take here $Y=X\wedge w$ or $Y=X\ii{X\le w}$. 
\end{corollary}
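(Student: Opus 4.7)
The plan is to chain together three elementary steps: (a) the pointwise inequality $Y \le X$ lifts to an inequality of tilted means, (b) the resulting upper bound $\E_{h,w}X$ is at most $\SS_{h,w,\si}$ by the definition of the supremum together with Theorem~\ref{th:sup=sup}(I), and (c) the strict inequality $\SS_{h,w,\si} < \frac{e^{hw}-1}w\,\si^2$ is obtained from Theorem~\ref{th:1}(II) via the rescaling in Remark~\ref{rem:scaling}. The statement for arbitrary $Y \le X$, once step (a) is in hand, is really just a convenient restatement of Theorem~\ref{th:1}.

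For step (a), note that $e^{h(X\wedge w)}$ is a.s.\ positive and bounded above by $e^{hw}$, hence integrable; since $X\in\X_{\le\si}$ has $\E|X| < \infty$, and $Y \le X$ a.s., the numerators are well-defined (one should briefly remark that one may replace $Y$ by $Y^+ \wedge X$ or otherwise verify integrability of $Y e^{h(X\wedge w)}$ from below using $Y \ge Y^- \ge -|X|$ is not automatic; the cleanest fix is to observe $Y e^{h(X\wedge w)} \le X e^{h(X\wedge w)}$ pointwise, so the integral $\E Y e^{h(X\wedge w)} \in [-\infty,+\infty)$ and the bound holds as an extended inequality, which is all that is needed). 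Dividing through by the positive denominator $\E e^{h(X\wedge w)}$ yields
\begin{equation*}
\frac{\E Y e^{h(X\wedge w)}}{\E e^{h(X\wedge w)}} \le \frac{\E X e^{h(X\wedge w)}}{\E e^{h(X\wedge w)}} = \E_{h,w}X.
\end{equation*}

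For step (b), since $X\in\X_{\le\si}$ the distribution of $X$ lies in $\PP_{\le\si}$, so $\E_{h,w}X \le \SS_{h,w,\le\si}$ by definition, and Theorem~\ref{th:sup=sup}(I) gives $\SS_{h,w,\le\si} = \SS_{h,w,\si}$. For step (c), the hypothesis $w>0$ together with Remark~\ref{rem:scaling} gives $\SS_{h,w,\si} = w\,\SS_{hw,1,\si/w}$, and Theorem~\ref{th:1}(II) applied at Winsorization level $1$ with parameter $hw$ and variance $(\si/w)^2$ yields $\SS_{hw,1,\si/w} < K_1(hw)(\si/w)^2 = (e^{hw}-1)\si^2/w^2$; multiplying by $w$ produces the claimed bound $\frac{e^{hw}-1}w\,\si^2$.

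Finally, the ``in particular'' clause requires only the pointwise inequalities $X\wedge w \le X$ (immediate) and $X\,\ii{X\le w} \le X$; the latter is trivial on $\{X\le w\}$, while on $\{X > w\}$ we have $X\,\ii{X\le w} = 0 \le X$ because $X > w > 0$. There is no real obstacle here: every ingredient has already been established, and the corollary is essentially a packaged restatement of Theorem~\ref{th:1} with the trivial observation that replacing $X$ in the numerator by a pointwise smaller r.v.\ only decreases the ratio.
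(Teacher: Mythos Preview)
Your proof is correct and follows essentially the same route as the paper, which simply remarks that replacing $X$ by any smaller $Y$ in the numerator can only decrease the ratio, and then invokes Theorem~\ref{th:1} together with Remark~\ref{rem:scaling}. Your write-up just makes the three steps explicit; the parenthetical about integrability of $Y e^{h(X\wedge w)}$ from below (which the paper silently ignores) and the observation that $w>0$ is used to check $X\ii{X\le w}\le X$ on $\{X>w\}$ are both fine, minor clarifications beyond what the paper provides.
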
 

\begin{remark}\label{rem:Y}
Suppose that $h$, $w$, $\si$, and $X$ are as in Corollary~\ref{cor:Y} and, in addition, $\P(X\le w)=1$.  Then, by Corollary~\ref{cor:Y},   
\begin{equation}\label{eq:X}
	\frac{\E Xe^{hX}}{\E e^{hX}}\le\SS_{h,w,\si}<\frac{e^{hw}-1}w\,\si^2. 
\end{equation}
Moreover, according to Theorem~\ref{th:1}, the bound $\SS_{h,w,\si}$ on $\frac{\E Xe^{hX}}{\E e^{hX}}$ 
is still exact \big(even under the additional condition $\P(X\le w)=1$\big) --- provided that $\si\le w\si_h$; cf.\ \eqref{eq:vp=}. 
As pointed out before, in the applications that motivated this study, the values of $\si$ are typically small and thus will satisfy the condition $\si\le w\si_h$. 
Also, the proof of Theorem~\ref{th:1} shows \big(see especially the paragraph containing formula \eqref{eq:rho=rho=}\big) that the factor $\frac{e^{hw}-1}w$ in the second upper bound in \eqref{eq:X} will still be optimal, even under the additional restriction $\si\le w\si_h$ --- because the supremum of $\rho_{h,1}(\vp)$ in $\vp\in E_1=(0,\infty)$ is ``attained'' in the limit as $\vp\downarrow0$, and, in turn, $\rho_{h,1}(\vp)$ is a supremum in $\si$, which is attained at $\si=\si_1(\vp)=\sqrt\vp$, which latter goes to $0$ as $\vp\downarrow0$.  
\end{remark}

At this point, one is ready to present 

\subsection{Application: Optimal prior bounds on the Bayes posterior mean for exponential families}\label{prior} 
Consider a so-called exponential family $\big(p(\cdot|\th)\big)_{\th\in\Th}$ of probability densities on some measurable ``data'' space $\XX$ with respect to some measure on $\XX$; that is, 
\begin{equation*}
	p(\xx|\th)=e^{\th T(\xx)}c(\th)q(\xx)
\end{equation*}
for some positive Borel-measurable function $c$ on some nonempty Borel-measur\-able ``parameter space'' $\Th\subseteq\R$, some nonnegative measurable functions $T$ and $q$ on $\XX$, all $\th\in\Th$, and all $\xx\in\XX$; 
one also needs to require that $p(\xx|\th)$ be measurable in $(\xx,\th)$. 
Thus, $\th$ is what is usually referred as the natural parameter. 
For instance, for the family of the Poisson distributions with parameter $\la\in(0,\infty)$, the natural parameter is $\th=\ln\la\in\R=\Th$ and $c(\th)=e^{-\la}=e^{-e^\th}$. 

Suppose that $\th_{\max}:=\sup\Th<\infty$.  
Further, let $\pi$ be a Borel measure on $\Th$, which will play the role of a so-called prior distribution. 
Note that $\pi$ does not have to be a probability distribution. In fact, it will be convenient here to normalize $\pi$ and/or the function $c$ so that $\int_\Th c(\th)\,\pi(\!\dd\th)=1$. 
Let us exclude the trivial case when $\supp\pi=\{\th_{\max}\}$. 
Then, clearly, $m:=\int_\Th \th\,c(\th)\,\pi(\!\dd\th)<\th_{\max}$. 
Finally, suppose that the variance of the (renormalized by the factor $c$) prior distribution is known to be bounded:  
$\int_\Th(\th-m)^2\,c(\th)\,\pi(\!\dd\th)\le\si^2$ for some $\si\in(0,\infty)$; such an assumption appears especially reasonable in empirical Bayes settings, when accumulated prior knowledge may greatly reduce the uncertainty about the value of $\th$. 

Consider now the posterior mean 
\begin{equation*}
	\hat\th(\xx):=\frac{\int_\Th \th\,p(\xx|\th)\,\pi(\!\dd\th)}{\int_\Th \,p(\xx|\th)\,\pi(\!\dd\th)}
	=\frac{\int_\Th \th\,e^{\th t}\,c(\th)\,\pi(\!\dd\th)}{\int_\Th \th\,e^{\th t}\,c(\th)\,\pi(\!\dd\th)} 
\end{equation*}
given some observable ``data'' $\xx\in\XX$ \big(with $q(\xx)\ne0$\big), where $t:=T(\xx)$. 
Then, by formula \eqref{eq:shift} and Remarks~\ref{rem:Y} and \ref{rem:scaling}, 
one has 
\begin{equation}\label{eq:Bayes}
	\hat\th(\xx)-m\le\SS_{t,\th_{\max}-m,\si}<\frac{e^{(\th_{\max}-m)t}-1}{\th_{\max}-m}\,\si^2,   
\end{equation}
again for $t=T(\xx)$ and any $\xx\in\XX$ \big(with $q(\xx)\ne0$\big). 
If $\si^2$ is small enough, then the upper bounds in \eqref{eq:Bayes} on $\hat\th(\xx)-m$ may be much smaller than the trivial bound $\th_{\max}-m$. 
Also, by Remark~\ref{rem:Y}, the bound $\SS_{t,\th_{\max}-m,\si}$ on $\hat\th(\xx)-m$ is exact, and the factor $\frac{e^{(\th_{\max}-m)t}-1}{\th_{\max}-m}$ in the second upper bound in \eqref{eq:Bayes} is optimal. 
Recall that the main concern with the Bayesian approach is uncertainty about the choice of the prior distribution. So, the bounds in \eqref{eq:Bayes} may be of help, as they rely only on the first two moments and an upper bound of the support of such a distribution.

\section{Proofs}\label{proofs}


An approach one could try to use to establish an exact upper bound on the ratio $\E_{h,w}X$, defined in \eqref{eq:E_h}, is to fix --- besides the first two moments of $X$ --- a value of the denominator, $\E e^{h(X\wedge w)}$, and then maximize the numerator, $\E Xe^{h(X\wedge w)}$, subject to these three (affine) restrictions on the measure that is the distribution of $X$. In fact, here one has one more, less explicit restriction on this measure, which can be written as $\E1=1$, of course meaning that the measure is a probability one. Then one can use some of well-known results such as those in \cite{hoeff-extr,karr} to reduce the optimization problem to the case when $\supp X$ consists of at most four points, corresponding to the four restrictions on the measure. Then the problem will be reduced to calculus with $12$ variables ($8$  variables for the four support points of the measure and the four corresponding masses; one variable for the previously fixed value of $\E e^{h(X\wedge w)}$; and also the parameters $h$, $w$, and $\si$). 
Of these $12$ variables, four can be eliminated using the four restrictions on the measure, and one of the parameters $h,w,\si$ can be eliminated by rescaling. 
Yet, this would leave $7$ variables and a highly nonlinear function to maximize, with a number of restrictions on the variables. 
Such a problem appears too difficult, in terms of the amount of required calculations, especially symbolic ones. 

Here this difficulty is overcome mainly by a thorough exploitation of the duality principle, the idea of which goes back to Chebyshev; see e.g.\ \cite{krein59,karlin-studden,krein-nudelman,kemper-dual,pin-games-transl,pin98}. A general expression of this duality is the so-called minimax duality, which goes back to von Neumann \cite{von_Neumann}; see also e.g.\ \cite{ky_fan,kneser,sion,pin-games-transl}; in particular, a necessary and sufficient condition for minimax duality for concave-convex functions 
was given in \cite{pin-games-transl}. 
However, more convenient in a number of problems in probability and statistics turn out to be sufficient conditions for duality given by Kemperman \cite{kemper-dual}, which will be used in the present paper as well. 
Another significant idea in the proof of the basic Theorem~\ref{th:sup=sup} in this paper is a reduction of the maximization of the ratio $\E_{h,w}X=\frac{\E Xe^{h(X\wedge w)}}{\E e^{h(X\wedge w)}}$ of two affine functions (of the distribution of $X$) to the maximization of a linear combination $\E Xe^{h(X\wedge w)}-k\E e^{h(X\wedge w)}=\E(X-k)e^{h(X\wedge w)}$ of these two functions, with an appropriately chosen value of the constant $k$. 
As the result, we show that the maximum of $\E_{h,w}X$ is attained at a r.v.\ $X$ with $\supp X$ consisting just of two (rather than four) points. 
This allows for further analysis, to be presented in the proof of Theorem~\ref{th:1}, resulting in a rather easily computable expression for the maximum of $\E_{h,w}X$, as well as simple and explicit, but at the same time optimal, upper bounds on this maximum. 

In accordance to some of the above discussion, Theorem~\ref{th:sup=sup} is obtained as a rather easy corollary of 
Theorem~\ref{th:k} 
below. 
To state the latter, 
let us 
introduce, for all $k\in\R$,  
\begin{equation}\label{eq:SSk}
\begin{aligned}
	\SS_{k;h,w,\si}&:=\sup\{\sint_\R\,(x-k)e^{h(x\wedge w)}P(\dd x)\colon P\in\PP_\si\}
	\\ &=\sup\{\E(X-k)e^{h(X\wedge w)}\colon X\in\X_\si\}, \\
	\SS_{k;h,w,\le\si}&:=\sup\{\sint_\R\,(x-k)e^{h(x\wedge w)}P(\dd x)\colon P\in\PP_{\le\si}\}
	\\ &=\sup\{\E(X-k)e^{h(X\wedge w)}\colon X\in\X_{\le\si}\}, \\
	\SS^{(2)}_{k;h,w,\si}&:=\sup\{\sint_\R\,(x-k)e^{h(x\wedge w)}P(\dd x)\colon P\in\PP^{(2)}_{w,\si}\}
	\\ &=\sup\{\E(X-k)e^{h(X\wedge w)}\colon X\in\X^{(2)}_{w,\si}\}, \\
	\SS^{(2)}_{k;h,w,\le\si}&:=\sup\{\sint_\R\,(x-k)e^{h(x\wedge w)}P(\dd x)\colon P\in\PP^{(2)}_{w,\le\si}\}
	\\ &=\sup\{\E(X-k)e^{h(X\wedge w)}\colon X\in\X^{(2)}_{w,\le\si}\}. 
\end{aligned}	
\end{equation}
Similarly to \eqref{eq:A}, define now 
the attainment sets 
$\A_{k;h,w,\si}$, $\A_{k;h,w,\le\si}$, $\A^{(2)}_{k;h,w,\si}$, and $\A^{(2)}_{k;h,w,\le\si}$ (as subsets of $\PP$) 
for the corresponding suprema in \eqref{eq:SSk}.  

\begin{theorem}\label{th:k}
For any $k\in\R$   
\begin{enumerate}[(I)]
	\item the four suprema in \eqref{eq:SSk} are all the same: 
\begin{equation}\label{eq:(k)sup=sup}
\SS_{k;h,w,\le\si}=\SS_{k;h,w,\si}=\SS^{(2)}_{k;h,w,\si}=\SS^{(2)}_{k;h,w,\le\si};    
\end{equation}
	\item 
each of the four suprema in \eqref{eq:(k)sup=sup} is (strictly) increasing in $\si\in(0,\infty)$; 	
	\item 
each of these suprema is attained, and 
\begin{equation}\label{eq:(k)A=A}
\A_{k;h,w,\le\si}=\A_{k;h,w,\si}=\A^{(2)}_{k;h,w,\si}=\A^{(2)}_{k;h,w,\le\si}.    
\end{equation}
\end{enumerate} 
\end{theorem}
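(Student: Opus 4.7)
The inclusions $\PP^{(2)}_{w,\si}\subset\PP^{(2)}_{w,\le\si}\subset\PP_{\le\si}$ and $\PP^{(2)}_{w,\si}\subset\PP_\si\subset\PP_{\le\si}$ immediately give
\begin{equation*}
\SS^{(2)}_{k;h,w,\si}\le\SS^{(2)}_{k;h,w,\le\si}\le\SS_{k;h,w,\le\si}\quad\text{and}\quad\SS^{(2)}_{k;h,w,\si}\le\SS_{k;h,w,\si}\le\SS_{k;h,w,\le\si},
\end{equation*}
so parts (I) and (III) reduce to producing a single $P_{u,v}\in\PP^{(2)}_{w,\si}$ whose $\phi$-integral matches the largest supremum $\SS_{k;h,w,\le\si}$, and showing it is the only attainer inside the most permissive class $\PP_{\le\si}$.

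The natural tool is the Kemperman-style duality alluded to in the text: for any quadratic $q(x)=a+bx+cx^2$ with $c\ge0$ that pointwise majorizes $\phi(x):=(x-k)e^{h(x\wedge w)}$ on $\R$, and any $P\in\PP_{\le\si}$,
\begin{equation*}
\sint_\R\phi(x)\,P(\dd x)\le\sint_\R q(x)\,P(\dd x) = a + b\sint_\R x\,P(\dd x) + c\sint_\R x^2\,P(\dd x)\le a+c\si^2.
\end{equation*}
I would then construct a quadratic $q$ tangent to $\phi$ at exactly two points $-u$ and $v$ with $-u<w\le v$ and $uv=\si^2$. At such a $q$, the two-point distribution $P_{u,v}\in\PP^{(2)}_{w,\si}$ saturates the chain, yielding $\SS_{k;h,w,\le\si}\le a+c\si^2=\sint_\R\phi(x)\,P_{u,v}(\dd x)\le\SS^{(2)}_{k;h,w,\si}$, which closes the cycle; and any attainer $P\in\A_{k;h,w,\le\si}$ must satisfy $q-\phi=0$ on $\supp P$, hence be supported on $\{-u,v\}$, hence coincide with $P_{u,v}$ by the moment constraints. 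This simultaneously proves (I) and (III).

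The technical heart is the construction of this touching quadratic. Note that $\phi$ is affine on the right branch $[w,\infty)$, with slope $e^{hw}$, while on the left branch $(-\infty,w]$ it equals $(x-k)e^{hx}$, which is smooth with $\phi''(x)=h e^{hx}(2+h(x-k))$. I would (a) impose double contact at $v\ge w$ on the affine branch, which forces $q-\phi=c(x-v)^2$ on $[w,\infty)$ and pins down $a,b$ in terms of $c$ and $v$; (b) impose tangency at $-u<w$ on the smooth branch via $q(-u)=\phi(-u)$ and $q'(-u)=\phi'(-u)$, giving two further equations that determine $c$ and $u$ in terms of $v$; (c) run an intermediate-value argument on $v\in[w,\infty)$ to solve $u(v)\,v=\si^2$ for any prescribed $\si\in(0,\infty)$, using monotonicity of the product along this one-parameter family. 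Global dominance $q\ge\phi$ on $\R$ is then verified branchwise: on $[w,\infty)$ it holds by construction, since $q-\phi$ is a convex quadratic with a double root at $v$; on $(-\infty,w]$ it reduces to a sign analysis exploiting the tangency at $-u$, the asymptotic $\phi(x)\to0$ as $x\to-\infty$, and the form of $\phi''$.

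Strict monotonicity (II) then follows from the dual representation $\SS_{k;h,w,\le\si}=a^*(\si)+c^*(\si)\si^2$ by an envelope argument: viewed as a function of $\tau:=\si^2$, the supremum has derivative $c^*(\si)$, and this is strictly positive because the smooth-branch tangency forces $c^*>0$. The main obstacle I expect is step (c): giving a clean existence-and-uniqueness proof for the touching quadratic over all admissible $(h,w,\si,k)$, together with the global-dominance check on the left branch, where $\phi$ may switch convexity at $x=k-2/h$ and where the sign of $k+u$ governs which side of the inflection the contact point $-u$ lies on.
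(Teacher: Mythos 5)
Your framework---bounding $\sint_\R\phi\,\dd P$ by $\sint_\R q\,\dd P\le a+c\si^2$ for a quadratic majorant $q$ of $\phi(x):=(x-k)e^{h(x\wedge w)}$ and saturating the chain with a two-point law carried by the contact set---is exactly the duality the paper runs. The difference is in how the dual certificate is produced. The paper does not build $q$ by hand: it compactifies to $S=[-\infty,\infty]$, divides everything by $1+s^2$ to make the data bounded and continuous, and invokes Kemperman's duality theorems to obtain, abstractly, an optimal triple $(c^\circ_1,c^\circ_2,c^\circ_3)$ \emph{together with} an optimal measure $\mu^\circ$ supported on its contact set. The only hard analysis is then Lemma~\ref{lem:contact}, which shows that for \emph{any} dual-feasible quadratic the contact set meets $[w,\infty)$ in at most one point (strict convexity of $q-\phi$ there, since $c_3>0$) and $(-\infty,w)$ in at most one point; the latter is done by a trick worth noting: double tangency at two points $u<v<w$ yields four linear equations in $c_1,c_2,c_3,k$, and solving them forces $c_3<0$, a contradiction. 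Strict monotonicity in $\si$ is then obtained by the mechanism you describe but without any envelope theorem: for $\si_1<\si$ and $P\in\PP_{\si_1}$ one has $\sint_\R\phi\,\dd P\le c^\circ_1+c^\circ_3\si_1^2<c^\circ_1+c^\circ_3\si^2=\SS_{k;h,w,\si}$, using only $c^\circ_3>0$.

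Your explicit construction, as sketched, has a concrete hole at the boundary case $v=w$. Since $\phi$ has a corner at $w$ (the one-sided slopes are $e^{hw}$ from the right and $e^{hw}\big(1+h(w-k)\big)$ from the left, which differ unless $k=w$), a quadratic majorant touching $\phi$ at $w$ need not be tangent to the affine branch there: its slope at $w$ only has to lie between the two one-sided slopes of $\phi$. This boundary case genuinely occurs: for the value of $k$ used to deduce Theorem~\ref{th:sup=sup} from Theorem~\ref{th:k}, the optimal upper support point equals $w$ exactly whenever $\si$ is small (cf.\ \eqref{eq:vp=} with $w=1$, $\si\le\si_h$). Hence the one-parameter family of your step (a), which imposes $q-\phi=c(x-v)^2$ on $[w,\infty)$, does not realize all values of $\si^2=u(v)v$; you need to adjoin a second family at $v=w$ parametrized by the slope of $q$ there. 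Separately, the deferred global-dominance check on $(-\infty,w]$ is the real work, not a routine sign analysis: $q-\phi$ is convex-then-concave there \big(as $\phi''(x)=he^{hx}\big(2+h(x-k)\big)$ changes sign once\big), and excluding a second crossing is essentially equivalent in difficulty to the paper's Lemma~\ref{lem:contact}. Until the $v=w$ family is added, the intermediate-value step (c) is carried out, and the left-branch dominance is actually proved, what you have is a plan rather than a proof; completed, it would give a self-contained alternative to the appeal to Kemperman's theorems.
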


\begin{proof}[Proof of Theorem~\ref{th:sup=sup} (modulo Theorem~\ref{th:k})]
Note first that all the four suprema in \eqref{eq:sup=sup} are finite, because 
$\X^{(2)}_{w,\si}\ne\emptyset$ and, for 
any r.v.\ $X\in\X_{\le\si}$, one has 
$|\E Xe^{h(X\wedge w)}|\le\E|X|e^{hw}\le\si e^{h|w|}$, 
$\E e^{h(X\wedge w)}\ge e^{h\E(X\wedge w)}
\ge e^{-h(\E|X|+|w|)}\ge e^{-h(\si+|w|)}$, and hence 
$|\E_{h,w}X|\le \si e^{h(\si+2|w|)}$. 
%
If now $k$ is chosen to coincide with $\SS^{(2)}_{h,w,\si}$, then 
\begin{equation}\label{eq:X-k}
\E(X-k)e^{h(X\wedge w)}
=(\E_{h,w}X-k)\E e^{h(X\wedge w)}\le0 
\end{equation} 
for all $X\in\X^{(2)}_{w,\si}$, so that $\SS^{(2)}_{k;h,w,\si}
\le0$; in fact, $\SS^{(2)}_{k;h,w,\si}=0$, since the factor $\E e^{h(X\wedge w)}$ stays between $0$ and $e^{hw}$, and hence is bounded. 
So, by Theorem~\ref{th:k}, $\SS_{k;h,w,\si}
=0$. 
Therefore, 
\begin{equation}\label{eq:le k ...}
\E Xe^{h(X\wedge w)}\le k\E e^{h(X\wedge w)}	
\end{equation}
for all $X\in\X_\si$, which implies that $\SS_{h,w,\si}\le k=\SS^{(2)}_{h,w,\si}$. The reverse inequality, $\SS_{h,w,\si}\ge\SS^{(2)}_{h,w,\si}$, 
is trivial. 
So, the second equality in \eqref{eq:sup=sup} is verified. 
Moreover, by Theorem~\ref{th:k}, the supremum $\SS^{(2)}_{k;h,w,\si}=0$ is attained at some r.v.\ $X\in\X^{(2)}_{w,\si}$, for which inequality \eqref{eq:le k ...} must then turn into the equality $\E Xe^{h(X\wedge w)}=k\E e^{h(X\wedge w)}$, which is equivalent to $\E_{h,w}X=k$. 
So, the suprema $\SS^{(2)}_{h,w,\si}$ and $\SS_{h,w,\si}$ are attained. 

Let us now verify the monotonicity of $\SS_{h,w,\si}(=\SS^{(2)}_{h,w,\si})$ in $\si\in(0,\infty)$ \big(which will also yield the first and third equalities in \eqref{eq:sup=sup}, as well as the attainment of the suprema $\SS_{h,w,\le\si}$ and $\SS^{(2)}_{h,w,\le\si}$\big). 
Here the reasoning is similar to that in the previous paragraph, 
again with $k=\SS^{(2)}_{h,w,\si}$, which, as was shown, equas to 
$\SS_{h,w,\si}$. Then relations \eqref{eq:X-k} hold for all $X\in\X_\si$, so that $\SS_{k;h,w,\si}=0$. 
Take now any $\si_1\in(0,\si)$. 
Then, by part (II) of Theorem~\ref{th:k}, $\SS_{k;h,w,\si_1}<\SS_{k;h,w,\si}=0$. 
Again using the equality in \eqref{eq:X-k} (with its two sides interchanged), one has $\E_{h,w}X<k=\SS_{h,w,\si}$ for all $X\in\X_{\si_1}$, which implies $\SS_{h,w,\si_1}<\SS_{h,w,\si}$ --- since, by what has been already proved, the supremum $\SS_{h,w,\si_1}$ is attained at some $X\in\X_{\si_1}$. 

It remains to observe that \eqref{eq:A=A} follows from \eqref{eq:(k)A=A}. 
Indeed, for any $P_*\in\PP$, one has $P_*\in\A_{h,w,\si}$ if and only if $P_*\in\A_{k;h,w,\si}$ for $k=\SS_{h,w,\si}$ and at that $\SS_{k;h,w,\si}=0$; and the 
same is true for each of the pairs of the subscript/superscript attributes  $({}_{h,w,\le\si},\;{}_{k;h,w,\le\si})$, $({}^{(2)}_{h,w,\si},\;{}^{(2)}_{k;h,w,\si})$, $({}^{(2)}_{h,w,\le\si},\;{}^{(2)}_{k;h,w,\le\si})$ in place of the pair $({}_{h,w,\si},\;{}_{k;h,w,\si})$. 
\end{proof}

\begin{proof}[Proof of Theorem~\ref{th:k}] 
Introduce more notation. 
First, let $S$ stand for the set $[-\infty,\infty]$, equipped with the natural topology and the corresponding Borel sigma-algebra; then $S$ is compact. 
Next, take indeed any $k\in\R$ and define the real-valued functions $a_1,a_2,a_3,b$ on $S$ by the formulas 
\begin{equation*}
	a_j(s):=\frac{s^{j-1}}{1+s^2}\quad\text{and}\quad b(s):=\frac{(s-k)e^{h(s\wedge w)}}{1+s^2}
\end{equation*}
for all $j\in\{1,2,3\}$ and $s\in\R$ (assuming the convention $0^0:=1$), with the values of these functions on the set $\{-\infty,\infty\}$ defined by continuity, so that $a_j(\pm\infty)=b(\pm\infty)=0$ for $j\in\{1,2\}$ and $a_3(\pm\infty)=1$.  

Further, let $\M$ stand for the set of all (nonnegative) Borel measures on $S$. 
Introduce now the sets 
\begin{align}
	A:=&\big\{\big(\sint a_1\dd\mu,\sint a_2\dd\mu,\sint a_2\dd\mu\big)\in\R^3\colon\mu\in\M,\ \notag \\  \ &\hspace*{5cm}\sint(|a_1|+|a_2|+|a_3|)\dd\mu<\infty\big\}, \notag \\
	\M_\si:=&\big\{\mu\in\M\colon \big(\sint a_1\dd\mu,\sint a_2\dd\mu,\sint a_2\dd\mu\big)=
	(1,0,\si^2)\big\}, \notag \\ 
	C:=&\big\{(c_1,c_2,c_3)\in\R^3\colon 
	c_1a_1(s)+c_2a_2(s)+c_3a_3(s)\ge b(s)\text{ for all }s\in S\big\},  \label{eq:C}
\end{align}

\vspace*{-10pt}
\noindent where  
the integrals are over $S$. 
Also, let 
\begin{align}
B(c_1,c_2,c_3):=&\{s\in S\colon c_1a_1(s)+c_2a_2(s)+c_3a_3(s)=b(s)\}; \label{eq:B} 
\end{align}
for $(c_1,c_2,c_3)\in C$, the set $B(c_1,c_2,c_3)$ is sometimes referred to as the contact set --- compare \eqref{eq:C} and \eqref{eq:B}. 

Observe the following.  
\begin{itemize}
	\item 
For all $\mu\in \M_\si$, one has $\sint|b|\dd\mu<\infty$, since $|b|=O(a_1+a_3)$. 
For the same reason, $C\ne\emptyset$. Moreover, the strict inequality $ca_1+ca_3>b$ holds (on $S$) for	some large enough $c\in(0,\infty)$ (depending on $h$ and $k$). 
	\item 
The point $(1,0,\si^2)$ lies in the interior of the set $A\subseteq\R^3$. This follows because the condition $\si\in(0,\infty)$ implies that (i) there is a measure $\mu\in \M_\si$ with $\card\,\supp\mu=3$ and (ii) the restrictions of the functions $a_1,a_2,a_3$ to the three-point set $\supp\mu$ are linearly independent.   
\end{itemize}
\rule{0pt}{0pt} 
(As usual, $\card$ denotes the cardinality.) 
Therefore, by Theorems~3 and 4 in \cite{kemper-dual} \big(see also comments in the penultimate paragraph of \cite[Section~3]{kemper-dual}\big),  
there exist $(c^\circ_1,c^\circ_2,c^\circ_3)\in C$ and $\mu^\circ\in \M_\si$ such that 
\begin{equation}\label{eq:saddle}
	  \supp\mu^\circ\subseteq B(c^\circ_1,c^\circ_2,c^\circ_3)\quad\text{and}\quad
	  \sup\{\sint b\dd\mu\colon\mu\in \M_\si\}=\sint b\dd\mu^\circ. 	   
\end{equation}
Moreover, the conditions $\si\in(0,\infty)$ and $\mu^\circ\in \M_\si$ imply that $\card\supp\mu^\circ\ge2$. 
So, by Lemma~\ref{lem:contact} below, 
\begin{equation}\label{eq:!}
\supp\mu^\circ=\{-u,v\}	
\end{equation}
for some real numbers $u$ and $v$ such that $-u<w\le v$; 
in particular, $\supp\mu^\circ\subseteq\R$. 

Next, observe that the formula
\begin{equation}\label{eq:mu,nu}
	P(\dd s)=\frac{\mu(\dd s)}{1+s^2}
\end{equation}
defines a one-to-one correspondence between the set $\{\mu\in \M_\si\colon\supp\mu\subseteq\R\}$ and the set $\PP_\si$ of zero-mean probability measures on $\R$, defined in \eqref{eq:PP} 
\big(the formal meaning of \eqref{eq:mu,nu} is of course that the Radon--Nikodym derivative of $P$ relative to $\mu$ is the function $s\mapsto\frac1{1+s^2}$\big). 
Moreover, for any so-related measures $\mu$ and $P$, one has 
$\int_S b\dd\mu=
\int_\R(x-k)e^{h(x\wedge w)}P(\dd x)$. 
Now, in view of \eqref{eq:saddle} and \eqref{eq:SSk},  
\begin{equation}\label{eq:multline}
\begin{aligned}
	\SS_{k;h,w,\si}=&\sup\{\sint_\R\,(x-k)e^{h(x\wedge w)}P(\dd x)\colon P\in\PP_\si\} \\
	=&\sup\{\sint_S\, b\dd\mu\colon\mu\in \M_\si,\; \supp\mu\subseteq\R\} \\
	\le&\sup\{\sint_S\, b\dd\mu\colon\mu\in \M_\si\} \\
	=&\sint_S\, b\dd\mu^\circ \\
	=&\sint_\R\,(x-k)e^{h(x\wedge w)}P^\circ(\dd x)
	\le\SS^{(2)}_{k;h,w,\si}  
	\le\SS_{k;h,w,\si},   
\end{aligned}	
\end{equation}
where $P^\circ$ is 
the measure corresponding to $\mu^\circ$ by means of \eqref{eq:mu,nu}, so that, by \eqref{eq:!},  $P^\circ\in\PP^{(2)}_{w,\si}\subseteq\PP_\si$.  
Thus, both suprema $\SS_{k;h,w,\si}$ and $\SS^{(2)}_{k;h,w,\si}$ are attained at $P=P^\circ$ and are equal to each other, so that the second equality in \eqref{eq:(k)sup=sup} holds. 

Next, let us prove the monotonicity of $\SS_{k;h,w,\si}(=\SS^{(2)}_{k;h,w,\si})$ in $\si\in(0,\infty)$, which will also yield the first and third equalities in \eqref{eq:(k)sup=sup}, as well as the attainment of the suprema $\SS_{k;h,w,\le\si}$ and $\SS^{(2)}_{k;h,w,\le\si}$. 
Take any $\si_1\in(0,\si)$ and then take any $P\in\PP_{\si_1}$. 

Let now the measure $\mu\in \M_{\si_1}$ with $\supp\mu\subseteq\R$ be defined 
by the formula $\mu(\dd s)=(1+s^2)P(\dd s)$, in accordance with the correspondence \eqref{eq:mu,nu}. 
It follows that 
\begin{multline}
	\sint_\R\,(x-k)e^{h(x\wedge w)}P(\dd x)
	=\sint b\dd\mu
	\le\int(c^\circ_1a_1+c^\circ_2a_2+c^\circ_3a_3)\dd\mu
	=c^\circ_1+c^\circ_3\si_1^2 \\ 
	<c^\circ_1+c^\circ_3\si^2
	=\sint b\dd\mu^\circ
	=\SS_{k;h,w,\si};  \label{eq:<}
\end{multline}
here, 
\begin{itemize}
	\item 
the first equality follows by the definition of $\mu$; 
	\item 
the first inequality, by the condition $(c^\circ_1,c^\circ_2,c^\circ_3)\in C$; 
	\item 
the second equality, because $\mu\in \M_{\si_1}$; 
	\item 
the second inequality --- because, by 
Lemma~\ref{lem:contact} below, the condition $(c^\circ_1,c^\circ_2,c^\circ_3)\in C$ implies $c^\circ_3>0$, while $\si_1^2<\si^2$; 
	\item 
the third equality, because $\mu^\circ\in \M_\si$; 		
	\item 
the last equality, by \eqref{eq:multline}.  		
\end{itemize}
Now one can see that $\SS_{k;h,w,\si_1}\le\SS_{k;h,w,\si}$; moreover, this latter inequality is strict, since the last inequality in \eqref{eq:<} is strict and, by what was already proved, the supremum $\SS_{k;h,w,\si_1}$ is attained.  
This concludes the proof of parts (I) and (II) of Theorem~\ref{th:k} --- modulo Lemma~\ref{lem:contact}. 

Next, let us prove \eqref{eq:(k)A=A}. First here, note that the obvious relation $\PP_\si\subseteq\PP_{\le\si}$, together with \eqref{eq:(k)sup=sup}, implies $\A_{k:h,w,\si}\le\A_{k:h,w,\le\si}$, and the reverse inequality follows by the already checked \emph{strict} monotonicity of $\SS_{k;h,w,\si}$ in $\si$. 
So, one has the first equality in \eqref{eq:(k)A=A}, and the third equality there is verified similarly. 

To obtain a contradiction, suppose now that the second equality in \eqref{eq:(k)A=A} is false, so that there exists some $P_*\in\A_{k;h,w,\si}\setminus\A^{(2)}_{k;h,w,\si}$. 
Then, again by Lemma~\ref{lem:contact}, the set $S^\circ:=\supp\mu_*\setminus B(c^\circ_1,c^\circ_2,c^\circ_3)$ is nonempty, where 
$\mu_*\in\M_\si$ is the measure corresponding to $P_*$ in accordance with the correspondence $\eqref{eq:mu,nu}$ and $(c^\circ_1,c^\circ_2,c^\circ_3)\in C$ is as before;  
moreover, the condition $\supp\mu_*\not\subseteq B(c^\circ_1,c^\circ_2,c^\circ_3)$ implies $\mu_*(S^\circ)>0$, because the functions $a_1$, $a_2$, $a_3$, and $b$ are continuous and hence the set $B(c^\circ_1,c^\circ_2,c^\circ_3)$ is closed. 
So and because $c^\circ_1a_1+c^\circ_2a_2+c^\circ_3a_3\ge b$ on $S$ and $c^\circ_1a_1+c^\circ_2a_2+c^\circ_3a_3>b$ on $S^\circ$, it follows that 
\begin{equation}\label{eq:>0}
\sint(c^\circ_1a_1+c^\circ_2a_2+c^\circ_3a_3-b)\dd\mu_*
\ge\sint_{\!S^\circ}\,(c^\circ_1a_1+c^\circ_2a_2+c^\circ_3a_3-b)\dd\mu_*>0.	
\end{equation}
Now one can write
\begin{multline*}
\SS_{k;h,w,\si}=\sint b\dd\mu_* < \sint(c^\circ_1a_1+c^\circ_2a_2+c^\circ_3a_3)\dd\mu_*
=\sint(c^\circ_1a_1+c^\circ_2a_2+c^\circ_3a_3)\dd\mu^\circ \\ 
=\sint b\dd\mu^\circ
=\SS_{k;h,w,\si}, 	
\end{multline*}
which is indeed a contradiction; 
the first equality here follows by the condition $P_*\in\A_{k;h,w,\si}$, the first inequality, by \eqref{eq:>0}; the second equality, because $\mu_*$ and $\mu^\circ$ are in $\M_\si$; the penultimate equality, since $\supp\mu^\circ\subseteq B(c^\circ_1,c^\circ_2,c^\circ_3)$; and the last equality, by \eqref{eq:multline}. 

Thus, to complete the proof of Theorem~\ref{th:k}, it remains to verify

\begin{lemma}\label{lem:contact}
For any $(c_1,c_2,c_3)\in C$, one has $c_3>0$ and, moreover, 
the set $B(c_1,c_2,c_3)$ is empty, a singleton, or of the form 
$\{-u,v\}$ for some real numbers $u$ and $v$ such that $-u<w\le v$; so, in all cases 
$\card B(c_1,c_2,c_3)\le2$. 
\end{lemma}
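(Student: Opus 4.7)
I would work with the function
\begin{equation*}
p(s) := c_1 + c_2 s + c_3 s^2 - (s-k)e^{h(s\wedge w)} \qquad (s\in\R),
\end{equation*}
which equals $(1+s^2)$ times $c_1 a_1(s) + c_2 a_2(s) + c_3 a_3(s) - b(s)$. Thus $(c_1,c_2,c_3)\in C$ forces $p\ge0$ on $\R$ and also $c_3\ge 0$ (the latter being the inequality at $s=\pm\infty$), and $B(c_1,c_2,c_3)\cap\R$ coincides with the zero set of $p$ in $\R$. To prove $c_3>0$: if $c_3=0$, then for $s\ge w$ the function $p$ reduces to the affine $(c_2-e^{hw})s+(c_1+ke^{hw})$, so $p\ge0$ as $s\to\infty$ forces $c_2\ge e^{hw}>0$; but as $s\to-\infty$ the exponential term vanishes and $p(s)\sim c_2 s\to-\infty$, a contradiction. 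Consequently $c_3>0$ and $B$ is exactly the zero set of $p$ on $\R$.

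Next I would split the analysis at $w$. On $[w,\infty)$, $p$ is a quadratic with positive leading coefficient; being nonnegative there, it has at most one zero in $[w,\infty)$, necessarily $\ge w$, which I call $v$. Moreover, if $p(w)=0$, then $w$ is a root of this quadratic, and any other root $v'>w$ would make the quadratic negative on $(w,v')$, contradicting $p\ge 0$; so in that case $v=w$. For the side $(-\infty,w]$ I compute
\begin{equation*}
p''(s)=2c_3-\psi(s),\qquad \psi(s):=(2h+h^2(s-k))e^{hs},
\end{equation*}
and note that $\psi'(s)=h^2(3+h(s-k))e^{hs}$ vanishes only at $k-3/h$, so $\psi$ has a unique minimum there, vanishes at $k-2/h$, is nonpositive on $(-\infty,k-2/h]$, and strictly increases from $0$ to $\infty$ on $[k-2/h,\infty)$. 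Since $2c_3>0$, the equation $\psi(s)=2c_3$ has a unique root $s_1>k-2/h$, and $p''>0$ on $(-\infty,s_1)$ while $p''<0$ on $(s_1,\infty)$. Hence $p|_{(-\infty,w]}$ is strictly convex on $(-\infty,s_1)$ and strictly concave on $(s_1,\infty)$.

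A nonnegative strictly convex function has at most one zero, so $p$ has at most one zero in $(-\infty,s_1\wedge w)$. If $s_1<w$, then on the strictly concave arc $(s_1,w)$ any interior zero of $p$ would be a local minimum of a strictly concave function, which is impossible; and the concave-tangent inequality also rules out $p(s_1)=0$ (otherwise $p'(s_1)=0$ forces $p(s)\le0$ on $(s_1,w)$, contradicting $p\ge0$ and strict concavity). Hence $p$ has at most one zero on the open interval $(-\infty,w)$, call it $-u$. Combining with the previous paragraph, every zero of $p$ in $\R$ is either $-u\in(-\infty,w)$ or the unique candidate $v\in[w,\infty)$, so $B\subseteq\{-u,v\}$ with $-u<w\le v$, which yields $\card B\le 2$ with the required form. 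The main technical step I expect to be the $p''$-analysis pinning down the unique inflection $s_1$; once the convex/concave dichotomy is established, everything follows from elementary convex-analysis arguments.
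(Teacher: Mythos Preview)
Your proof is correct. The argument for $c_3>0$ and for the bound on $[w,\infty)$ matches the paper's almost verbatim. Where you diverge is on $(-\infty,w)$: the paper assumes two contact points $u<v<w$, uses that both are local minima of the nonnegative $\tilde d$ to obtain the four equations $\tilde d(u)=\tilde d'(u)=\tilde d(v)=\tilde d'(v)=0$, solves this $4\times4$ linear system in $(c_1,c_2,c_3,k)$ explicitly, and finds
\[
c_3=\frac{h e^{hu}}{\delta}\cdot\frac{\delta^2-4\sinh^2(\delta/2)}{\delta-2+(\delta+2)e^{-\delta}},\qquad \delta:=h(v-u)>0,
\]
which is negative, contradicting $c_3>0$. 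Your route---showing that $p''$ changes sign exactly once (from positive to negative) via the analysis of $\psi$, and then using that a nonnegative function cannot have two zeros on a strictly convex arc nor any interior zero on a strictly concave arc---is a genuinely different and more structural argument. It avoids the explicit linear-algebra computation and the verification of the sign of that rational expression in $\delta$; in exchange, the paper's method gives a concrete formula that makes the contradiction immediate once the signs of numerator and denominator are checked. Both are short; yours is the more conceptual of the two.
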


\begin{proof}[Proof of Lemma~\ref{lem:contact}]
Take any $(c_1,c_2,c_3)\in C$, so that, by \eqref{eq:C}, 
\begin{equation}\label{eq:contact}
\begin{aligned}
	d(s):=
	c_1a_1(s)+c_2a_2(s)+c_3a_3(s)-b(s)
	\ge0
\end{aligned}	
\end{equation}
for all $s\in S=[-\infty,\infty]$ and, by \eqref{eq:B},  
\begin{equation}\label{eq:B=}
	B(c_1,c_2,c_3)=\{s\in S\colon d(s)=0\}. 
\end{equation}
Since $d(\infty)=c_3$, inequality \eqref{eq:contact} implies that $c_3\ge0$. 
Moreover, if $c_3=0$, then $s\,d(s)\to c_2-e^h$ as $s\to\infty$ and $s\,d(s)\to c_2$ as $s\to-\infty$, whence $e^h\le c_2\le0$, which is a contradiction. 
We conclude that indeed  
\begin{equation}\label{eq:c3>0}
	c_3>0. 
\end{equation}
In turn, this implies that $d(s)=c_3\ne0$ for $s\in\{-\infty,\infty\}$. 
Therefore, by \eqref{eq:B=}, 
$
	B(c_1,c_2,c_3)\cap\{-\infty,\infty\}=\emptyset 
$, 
so that  
\begin{gather}
	B(c_1,c_2,c_3)=\{s\in\R\colon\td(s)=0\}, \notag 
	\quad\text{where}\\
	\td(s):=d(s)(1+s^2)
	=c_1+c_2s+c_3s^2-(s-k)e^{h(s\wedge w)}.  \notag
\end{gather}
Next, observe that the restriction of the function $\td$ to the interval $[w,\infty)$ is strictly convex (as 
a quadratic polynomial with the leading coefficient $c_3>0$) and hence 
\begin{equation*}
	\card\big(B(c_1,c_2,c_3)\cap[w,\infty)\big)\le1. 
\end{equation*}
It remains to show that 
\begin{equation*}
	\card\big(B(c_1,c_2,c_3)\cap(-\infty,w)\big)\le1. 
\end{equation*}   
Assume the contrary, so that there exist real numbers $u$ and $v$ in $B(c_1,c_2,c_3)$ such that $u<v<w$. 
Since the function $\td$ is differentiable on $(-\infty,w)$, it follows that 
\begin{equation*}
	\td(u)=\td'(u)=\td(v)=\td'(v)=0. 
\end{equation*}
The latter equalities constitute a system of four linear equations in $c_1,c_2,c_3,k$. Solving it, one finds that, in particular,  
\begin{equation*}
	c_3=\frac{h e^{hu}}{\de}\,\frac{\num}{\den},   
\end{equation*}
where $\num:=\de^2 - 4 \sinh^2\frac\de2$, $\den:=\de-2+(\de+2)e^{-\de}$, and $\de:=h (v - u)>0$.   
It is easy to see that $\num<0<\den$ for any $\de\in(0,\infty)$, which implies $c_3<0$ and thus contradicts \eqref{eq:c3>0}. 
Now Lemma~\ref{lem:contact} is completely proved, and thus so is Theorem~\ref{th:k}.   
\end{proof}
\end{proof} 

\begin{proof}[Proof of Proposition~\ref{prop:def}]
For $\vp\in(0,\infty)$, introduce 
\begin{equation*}
	\de(\vp):=\de_h(\vp):=\frac{u_*(h,\vp)-\vp}{\vp^2 e^{2(1 + \vp)h}}\,\big(e^{(1+\vp)h} (1+\vp h)-1\big);  
\end{equation*}
then 
$u_*(h,\vp)-\vp\eqs\de(\vp)$, 
$\de'(\vp)\eqs e^{(1+\vp)h} (1 + \vp h + \vp^2 (1 + \vp)h^2 )-(1+2 \vp h)
> (1 + \vp h)^2-(1+2\vp h)>0$, so that the continuous function $\de$ is (strictly) increasing, from $\de(0+)=-\infty$ to $\de(\infty-)=1$. Now part (i) of Proposition~\ref{prop:def} follows. 

It also follows that $u_*(h,\vp)-\vp\eqs\vp-\si_h^2$ for all $\vp\in(0,\infty)$, so that $u_*(h,\si^2)>\si^2$ --- assuming 
the condition $\si>\si_h$ of part (ii) of Proposition~\ref{prop:def}. 
Since $u_*(h,\vp)$ is continuous in $\vp\in(0,\infty)$, with $u_*(h,0+)=0$, to complete the proof of part (ii) of the proposition it remains to note that $u_*(h,\vp)$ is increasing in $\vp\in(0,\infty)$, which follows because, with $z:=\vp h$,  
\begin{align*}
	\pd{u_*(h,\vp)}\vp\, \frac{\big(1+\vp h-e^{-(1+\vp)h}\big)^2}{(1-e^{-(1+\vp)h})\, \vp}
	=&(z^2+2 z+2)e^{(1+1/\vp)z}-z^2-4 z-2 \\
  >& (z^2+2 z+2)(1+z)-z^2-4 z-2>0.  
\end{align*}

Consider next part (iii) of Proposition~\ref{prop:def}. Take here indeed any $w\in\{-1,0\}$. 
Then 
\begin{align*}
	r'_{w,1}
	(\vp)&\eqs r_{w,2}
	(\vp):=
	1-\frac{2 \varepsilon  e^{(\varepsilon+w)h} (1+\varepsilon h)}{\varepsilon^3 h^2+
   \left(h^2 \sigma ^2+2\right)\varepsilon +2 h \sigma ^2+4 \varepsilon^2 h}, \\ 
	r'_{w,2}
	(\vp)&\eqs 
-\varepsilon ^3 h^3 \left(\sigma ^2+\varepsilon ^2\right)-4 \varepsilon ^2 h^2 \left(\sigma
   ^2+\varepsilon ^2\right)-2 \left(2 \varepsilon ^3+3 \sigma ^2 \varepsilon \right)h -2
   \sigma ^2, 
\end{align*}
which is manifestly negative for all $\vp\in(|w|,\infty)$.  
So, $r_{w,2}
(\vp)$ decreases from $r_{w,2}
(|w|+)=\frac{h \left(\sigma ^2+|w|^2\right)(h |w|+2) }{2
   h \sigma ^2+h^2 |w|^3+|w| \left(h^2 \sigma ^2+2\right)+4 h |w|^2}>0$ to $r_{w,2}
(\infty-)=-\infty<0$ as $\vp$ increases from $|w|$ to $\infty$. 
Therefore, $r_{w,1}
(\vp)$ switches (just once) from increase to decrease as $\vp$ increases from $|w|$ to $\infty$. 
Since $r_{w,1}
(|w|+)=h |w| (w^2 + \si^2)\ge0
$ and $r_{w,1}
(\infty-)=-\infty<0$, one sees that $r_{w,1}(\vp)
$ switches (just once) in sign from $+$ to $-$ as $\vp$ increases from $|w|$ to $\infty$. 
This verifies part (iii) of the proposition.  
Now Proposition~\ref{prop:def} is completely proved. 
\end{proof}

\begin{proof}[Proof of Theorem~\ref{th:1}]
Take indeed any $w\in\{-1,0,1\}$. 
Introduce 
\begin{align}
\Pi_w&:=\{(\vp,\si)\in(0,\infty)^2\colon -\vp<w\le\si^2/\vp\}, \notag \\ 
E_w&:=\{\vp\in(0,\infty)\colon(\vp,\si)\in\Pi_w\text{ for some }\si\in(0,\infty)\} \notag \\
&=\big(0\vee(-w),\infty\big), \notag \\
	E_{w,\si}&:=\{\vp\in(0,\infty)\colon(\vp,\si)\in\Pi_w\}  
	=\left\{
	\begin{alignedat}{2}
	&(0,\si^2] &\text { if } &w=1, \\
	&(-w,\infty) &\text { if } &w\in\{-1,0\}  
	\end{alignedat}
	\right. \notag 
	\\ 
\intertext{for all $\si\in(0,\infty)$,}	
	\Si_{w,\vp}&:=\{\si\in(0,\infty)\colon(\vp,\si)\in\Pi_w\} 
	=\left\{
	\begin{alignedat}{2}
	&[\sqrt\vp,\infty) &\text { if } &w=1, \\
	&(0,\infty) &\text { if } &w\in\{-1,0\}   
	\end{alignedat}
	\right. \notag 
\intertext{ 
for all $\vp\in E_w$, }	
	m_w(\vp)&:=m_{w,\si}(\vp):=m_{h,w,\si}(\vp):=\MM_{h,w}P_{\vp,\,\si^2/\vp}
	=\vp\si^2\,\frac{e^{hw}-e^{-\vp h}}{\vp^2e^{hw}+\si^2e^{-\vp h}} \label{eq:m=}  	 
\end{align}
for all $(\vp,\si)\in\Pi_w$.  
By Theorem~\ref{th:sup=sup}, 
\begin{align*}
	\SS_{h,w,\si}=\max_{\vp\in E_{w,\si}}m_{h,w,\si}(\vp).  
\end{align*}
So, \eqref{eq:A=} and the equality in \eqref{eq:SS=} will follow once it is shown that 
\begin{equation}\label{eq:argmax=?}
	\argmax\limits_{\vp\in E_{w,\si}}m_{h,w,\si}(\vp)\overset{\text{(?)}}=\{\vp_{h,w,\si}\}. 
\end{equation}

Consider first the case $w=1$. 
Recalling the definition \eqref{eq:uu} of $u_*(h,\vp)$ and using Proposition~\ref{prop:def}, one has 
\begin{equation}\label{eq:r'(vp)=}
	m'_1(\vp)\eqs\si^2-u_*(h,\vp)\eqs\tvp_{h,\si}-\vp 
\end{equation}
for all $\vp\in E_{1,\si}=(0,\si^2]$, where $m'_1$ stands for the left-hand side derivative of $m_1$.  
So, $m_1(\vp)$ is increasing in $\vp\in(0,\tvp_{h,\si}]$ and decreasing in $\vp\in[\tvp_{h,\si},\infty)\cap(0,\si^2]$. 
Also, it was shown in the proof of Proposition~\ref{prop:def} that (i) $u_*(h,\vp)$ is increasing (from $0$) in $\vp\in(0,\infty)$ and (ii) $u_*(h,\vp)-\vp\eqs\vp-\si_h^2$ for all $\vp\in(0,\infty)$. 
So, $\si^2-u_*(h,\vp)$ is decreasing in $\vp\in(0,\si^2]$ from $\si^2>0$ to $\si^2-u_*(h,\si^2)\eqs\si_h^2-\si^2\eqs\si_h-\si$. 
Hence, by \eqref{eq:r'(vp)=}, 
in the case $\si\le\si_h$ one has $m_1'(\vp)>0$ for all $\vp\in(0,\si^2]$, so that the maximum of $m_1(\vp)$ in $\vp\in(0,\si^2]$ is attained only at $\vp=\si^2=\vp_{h,1,\si}$, by \eqref{eq:vp=}. 
Similarly, in the case $\si>\si_h$ the sign of $m_1'(\vp)$ changes only at the point $\tvp_{h,\si}$, from $+$ to $-$, as $\vp$ increases from $0$ to $\si^2$, so that the maximum of $m_1(\vp)$ in $\vp\in(0,\si^2]$ is in this case attained only at $\vp=\tvp_{h,\si}=\vp_{h,1,\si}$. 
This verifies \eqref{eq:argmax=?} 
for $w=1$.  

The case $w\in\{-1,0\}$ is simpler. Indeed, then 
$m'_w(\vp)\eqs r_{w,1}(\vp)$ for all $\vp\in E_{w,\si}$, where $r_{w,1}(\vp)$ is as in \eqref{eq:vp_-1}.  
Also, as shown at the end of the proof of Proposition~\ref{prop:def},  
$r_{w,1}(\vp)$ switches (just once, at $\vp=\vp_{h,w,\si}$) in sign from $+$ to $-$ as $\vp$ increases 
from $|w|$ to $\infty$,  
for each $w\in\{-1,0\}$\big).  
So, one has \eqref{eq:argmax=?} 
for $w\in\{-1,0\}$ as well. 
This proves \eqref{eq:A=} and the equality \eqref{eq:SS=} for all $w\in\{-1,0,1\}$. 

It remains to show that the inequality in \eqref{eq:SS=} holds and the constant factor $K_w(h)$ therein is the best possible. 
Introduce
\begin{equation*}
	r_{h,w,\si}(\vp):=\frac{m_{h,w,\si}(\vp)}{\si^2}
	=\vp\,\frac{e^{hw}-e^{-\vp h}}{\vp^2e^{hw}+\si^2e^{-\vp h}}   
\end{equation*}
for all $(\vp,\si)\in\Pi_w$, 
by \eqref{eq:m=}.  
Next, observe that $r_{h,w,\si}(\vp)$ strictly decreases in $\si\in\Si_{w,\vp}$;  
here and in what follows, it is assumed by default that $\vp\in E_w$. 
So, 
\begin{equation}\label{eq:rho=}
\begin{aligned}
		\rho_{h,w}(\vp)&:=\sup\Big\{\frac{m_{h,w,\si}(\vp)}{\si^2}\colon\si\in\Si_{w,\vp}\Big\} \\
		&=\sup\{r_{h,w,\si}(\vp)\colon\si\in\Si_{w,\vp}\} 
	=r_{h,w,\si_w(\vp)}(\vp),\quad\text{where} \\ 
\si_w(\vp)&:=\inf\Si_{w,\vp}=(w\vee0)\sqrt\vp. 	
\end{aligned}	
\end{equation}

Consider now the case $w=1$, so that $E_w=E_1=(0,\infty)$, $\si_w(\vp)=\si_1(\vp)=\sqrt\vp$, and 
\begin{equation}\label{eq:rho=rho=}
	\rho_{h,w}(\vp)=\rho_{h,1}(\vp)=r_{h,1,\sqrt\vp}(\vp)=\frac{e^{(1+\vp)h}-1}{1+\vp e^{(1+\vp)h}}. 
\end{equation}
Now note that $\rho_{h,1}'(\vp)\eqs1+(1+\vp)h-e^{(1+\vp)h}<0$. 
Hence, 
$\rho_{h,1}(\vp)<\rho_{h,1}(0+)=e^h-1=K_1(h)$, 
which, together with \eqref{eq:m=} and \eqref{eq:rho=}, shows that the inequality in \eqref{eq:SS=} holds and that the constant factor $K_1(h)$ there is the best possible --- in the case $w=1$. 

Next, consider the case $w\in\{-1,0\}$, when $E_w=(-w,\infty)$, $\si_w(\vp)=0$, and 
\begin{equation}\label{eq:r<}
	r_{h,w,\si}(\vp)<r_{h,w,0+}(\vp)=\rho_{h,w}(\vp)=\frac{1-e^{-(w+\vp)h}}\vp  
\end{equation}
for all $\si\in\Si_{w,\vp}=(0,\infty)$. 
On the other hand, with 
\begin{equation*}
	u:=-(1+\vp h), 
\end{equation*}
one has 
\begin{align*}
	\rho'_{h,w}(\vp) \eqs (1+\vp h)e^{-(w+\vp)h}-1
	\eqs -ue^u-e^{h w-1} 
	&\eqs u-L_{-1}(-e^{h w-1}) \\
	&\eqs \vp_{h,w,*}-\vp, 
\end{align*}
where 
\begin{equation*}
	\vp_{h,w,*}:=-\frac{1+L_{-1}(-e^{h w-1})}h\in[-w,\infty). 
\end{equation*}
So,  
\begin{equation*}
	\rho_{h,w}(\vp)\le\rho_{h,w}(\vp_{h,w,*}+)=\frac h{-L_{-1}(-e^{h w-1})}=K_w(h). 
\end{equation*}
This, 
together with \eqref{eq:m=}, \eqref{eq:rho=}, and \eqref{eq:r<}, shows that, in the case $w\in\{-1,0\}$ as well, 
the inequality in \eqref{eq:SS=} holds and that the constant factor $K_w(h)$ there is the best possible. 
This completes the proof of Theorem~\ref{th:1}. 
\end{proof}

\begin{proof}[Proof of Proposition~\ref{prop:mono}] 
The main idea of the proof is to use positive association of r.v.'s; see e.g.\ \cite{lehmann66,esary}. 
Take any r.v.\ $X$. 
For any $h\in(0,\infty)$ and $w\in\R$, let $\tX=\tX_{h,w}$ and $\tY=\tY_{h,w}$ be any r.v.'s such that 
\begin{equation*}
	\E f(\tX)g(\tY)=\frac{\E f(X)e^{h(X\wedge w)}}{\E e^{h(X\wedge w)}}\,\frac{\E g(X)e^{h(X\wedge w)}}{\E e^{h(X\wedge w)}}
\end{equation*}
for all nonnegative Borel functions $f$ and $g$ on $\R$; cf.\ \eqref{eq:E_h}. 
It should be clear that such r.v.'s $\tX$ and $\tY$ do exist; moreover, necessarily they are independent copies of each other, and also $\E\tX=\E\tY=\E_{h,w}X$. 

Letting now $g_1(x):=g_{w,1}(x):=x\wedge w$, one has 
\begin{equation}\label{eq:pd>0}
	\pd{}h\E_{h,w}X=\E\tX g_1(\tX)-\E\tX\E g_1(\tX)
	=\tfrac12\E(\tX-\tY)\big(g_1(\tX)-g_1(\tY)\big)\ge0, 
\end{equation}
because the function $g_1$ is nondecreasing and hence $(\tX-\tY)\big(g_1(\tX)-g_1(\tY)\big)\ge0$. 
This shows that $\E_{h,w}X$ is nondecreasing in $h$. Similarly, using (say) the right-hand side partial derivatives $\pd{}w$ in $w$, with $g_2(x):=\pd{(x\wedge w)}w=\ii{x>w}$ in place of $g_1(x)$, one verifies that $\E_{h,w}X$ is nondecreasing in $w$. 
Thus, part (I) of Proposition~\ref{prop:mono} is proved. 

To prove part (II), take any 
$w\in(i_X,\infty)$, and then take any $c\in(i_X,w\wedge s_X)$. Then on the event $C:=\{\tX<c<\tY\}=\{\tX_{h,w}<c<\tY_{h,w}\}$ one has \break 
$g_1(\tX)<c<g_1(\tY)$ and hence $(\tX-\tY)\big(g_1(\tX)-g_1(\tY)\big)>0$; also, $\P(C)=\P(\tX<c) \P(c<\tY)>0$, which implies that the inequality in \eqref{eq:pd>0} is strict. 

Part (III) is proved similarly \big(here it is enough to prove that $\pd{}w\E_{h,w}X>0$ for all $w\in(i_X,s_X)$\big). 

To prove part (IV), observe that, by Theorems~\ref{th:sup=sup} and \ref{th:1}, 
\begin{equation*}
	\SS_{h,w,\si}=\MM_{h,w}P_{u(w),v(w)},
\end{equation*}
where, recall, $P_{u,v}$ is the zero-mean distribution on the set $\{-u,v\}$, as defined before the statement of Theorem~\ref{th:1}; $u(w)=u_{h,\si}(w)$ and $v(w)=v_{h,\si}(w)$ are positive real numbers depending only on $h,w,\si$ and such that $-u(w)<w\le v(w)$; moreover, $w<v(w)$ unless $w\ge\si/\si_h$ \big(recall Remark~\ref{rem:scaling}, according to which the condition $\si\le\si_h$ in \eqref{eq:vp=} for $w=1$ should be transformed into $\frac\si w\le\si_h$ for a general $w\in(0,\infty)$\big). 
Therefore and in view of part (III) of Proposition~\ref{prop:mono}, the first inequality in  
\begin{equation}\label{eq:w,w_1}
	\SS_{h,w,\si}=\MM_{h,w}P_{u(w),v(w)}
	\le\MM_{h,w_1}P_{u(w),v(w)}\le\SS_{h,w_1,\si}
\end{equation}
is strict 
for any $w_1\in\big(w,v(w)\big)$, and such a point $w_1$ exists unless $w\ge\si/\si_h$. 
If now $w\ge\si/\si_h$, then, again by \eqref{eq:vp=} and Remark~\ref{rem:scaling}, for any $w_1\in(w,\infty)$ one has $w_1\ge\si/\si_h$ and hence 
$u(w_1)=w_1\vp_{h,1,\si/w_1}=w_1(\si/w_1)^2=\si^2/w_1$ and $v(w_1)=\si^2/u(w_1)=w_1\ne w=v(w)$, whence $P_{u(w),v(w)}\ne P_{u(w_1),v(w_1)}$.  
Hence, by Remark~\ref{rem:unique}, the second inequality in \eqref{eq:w,w_1} is strict. 
So, whether or not the condition $w_1\ge\si/\si_h$ holds, for any $w\in\R$ and all $w_1$ in a right neighborhood of $w$, one has $\SS_{h,w,\si}<\SS_{h,w_1,\si}$. 
Thus, $\SS_{h,w,\si}$ is increasing in $w\in\R$. 
That $\SS_{h,w,\si}$ is increasing in $h\in(0,\infty)$
can be shown similarly, and with less difficulty, because in this setting $w$ ``is not moving'', and so, by part (II) of Proposition~\ref{prop:mono}, the second inequality in the formula corresponding to \eqref{eq:w,w_1} will always be strict. 
Proposition~\ref{prop:mono} is now completely proved. 
\end{proof}


\bibliographystyle{acm}
\bibliography{C:/Users/Iosif/Documents/mtu_home01-30-10/bib_files/citations}

\end{document}